\documentclass[12pt]{article}
\usepackage{amsfonts}
\usepackage{graphicx,amssymb,amsfonts,amsmath,amscd}
\usepackage[all,ps,cmtip]{xy}
\usepackage{amscd}

\usepackage{mathrsfs}
\let\mathcal\mathscr

\usepackage{hyperref}

\def\Alb{\mathop{\rm Alb}\nolimits}

\def\codim{\mathop{\rm codim}\nolimits}
\def\dim{\mathop{\rm dim}\nolimits}

\def\llra{\hbox to 12mm{\rightarrowfill}}

\def\Pic{\mathop{\rm Pic}\nolimits}

\def\t{{\tau}}

\def\cF{{\mathcal F}}

\def\cJ{{\mathcal J}}

\def\cO{{\mathcal O}}

\def\1Y{{Y^{'}}}

\def\AA{{\widetilde{A/K}}}

\def\XX{{\widehat{X}}}
\def\VV{{\widetilde{V}}}
\def\WW{{\widehat{W}}}
\def\KK{{\widetilde{K}}}
\def\hh{{\widehat{h}}}

\def\ab{{\overline{b}}}

\newenvironment{proof}{\trivlist \item[\hskip\labelsep{\sc
Proof.}]\rm}{\hbox to.1pt{\hss}\hfill$\square$\bigskip\endtrivlist}

\newenvironment{proof5.1}{\trivlist \item[\hskip\labelsep{\sc
Proof of Theorem 5.1.}]\rm}{\hbox
to.1pt{\hss}\hfill$\square$\bigskip\endtrivlist}

\newtheorem{theo}{Theorem}[section]

\newtheorem{prop}[theo]{Proposition}
\newtheorem{lemm}[theo]{Lemma}
\newtheorem{coro}[theo]{Corollary}

\newtheorem{defi}[theo]{Definition}
\newtheorem{rema}[theo]{Remark}

\begin{document}
\title{An effective version of a theorem of Kawamata on the Albanese map}
\date{\today}
\author{Zhi Jiang}
\maketitle
To any  smooth complex projective variety $X$ are associated an abelian variety $\Alb(X)$ of dimension $q(X):=h^1(X,\cO_X)$, its {\em Albanese variety,} and a morphism $a_X:X\to \Alb(X)$, the {\em Albanese map,} which are very useful tools to study the geometry of $X$.

 Kawamata proved in \cite{KA}  that when the Kodaira dimension   $\kappa(X)$ is zero, the Albanese map is an algebraic fiber space, which means that:
\begin{itemize}
\item $a_X$ is surjective;
\item the fibers of $a_X$ are connected.
\end{itemize}
This kind of result (especially the second part) yields for example birational characterizations of abelian varieties:  $X$ is birational to an abelian variety if and only if $\kappa(X)=0$ and $q(X)=\dim(X)$.

However, the vanishing of  $\kappa(X)$ is not an effective condition (it means that the plurigenera $P_m(X):=h^0(X,\omega_X^m)$ are all 0 or 1 when $m> 0$ and that one of them is 1).
It is therefore
natural to try to prove the same result with weaker and effective
assumptions on the plurigenera of $X$.

For the surjectivity of $a_X$, this was done in a series of articles initiated by Koll\'ar (\cite{K}),   followed by
  Ein and Lazarsfeld (\cite{EL}) and later by Hacon and Pardini  (\cite{HAC1}) and Chen and Hacon (\cite{CH4}), who proved that $a_X$ is surjective if
$0< P_m(X)\leq 2m-3$ for some $m\ge2$, or if
 $P_3(X)=4$. We put here the finishing touch to this series by proving the following optimal result (Theorem \ref{hp}).

\medskip\noindent{\bf Theorem } {\em Let $X$ be a smooth complex projective variety. If
$$0<P_m(X)\leq 2m-2$$ for some $m\geq 2$, the Albanese map $a_X: X\to
\Alb(X)$ is surjective.} \medskip

When $C$ is a smooth projective curve of genus $2$, we have
$P_m(C)=2m-1$ for $m\geq 2$. However $a_C: C\rightarrow \Alb(C)$ is
not surjective. This example shows that without other assumptions,
our bound is optimal.

\medskip

As far as connectedness of the fibers of the Albanese map is concerned, they were no previous results in that direction. The main purpose of this paper is to show that there exists a
similar effective criterion for the Albanese morphism to be an algebraic fiber
space. More pecisely, we prove the following optimal bound (Theorem \ref{1} and Theorem \ref{10}).

\medskip\noindent{\bf Theorem } {\em Let $X$ be a smooth complex  projective variety. If $P_1(X)=P_2(X)=1$, or if
$$0<P_m(X)\leq m-2$$ for some $m\geq 3$, the Albanese map $a_X: X\rightarrow
\Alb(X)$ is an algebraic fiber space.} \medskip

Hacon and Pardini show   in \cite{HAC1} that for varieties with $P_3(X)=2$
and $q(X)=\dim(X)$, the Albanese map $a_X:
X\rightarrow \Alb(X)$ is a double covering. Hence $a_X$ is
surjective but does not have connected fibers. Furthermore,
$P_m(X)=m-1$ for any odd $m\geq 3$. From this example, we see that our
  result is optimal to a large extent.

\medskip
 As mentioned above, this criterion yields a numerical birational characterization of abelian varieties by adding $q(X)=\dim(X)$ to its hypotheses. The results and constructions developed here also lead to explicit descriptions of varieties with $q(X)=\dim (X)$ and small plurigenera, in the line of the series of papers \cite{CH1},
\cite{CH4}, \cite{HAC1}, and \cite{Ha}. For example, we can get a complete description of varieties with $P_2(X)=2$ and $q(X)=\dim (X)$.
We will come back to this in a future article.

\section{Preliminaries}
In this section we recall several theorems which will be used later. Throughout this article, we work over the fied of complex numbers and we denote  numerical equivalence by $\equiv$.

\medskip\noindent{\bf Vanishing theorem.} We state a result of Koll\'{a}r (\cite{K}, 10.15), which was
generalized later by Esnault and Viehweg.

\begin{theo}[Koll\'{a}r, Esnault-Viehweg]\label{kol}
Let $f: X\rightarrow Y$ be a surjective morphism from a smooth
projective variety $X$ to a normal variety $Y$. Let $L$ be a line
bundle on $X$ such that $L\equiv f^*M+\Delta$, where $M$ is a
$\mathbb{Q}$-Cartier $\mathbb{Q}$-divisor on $Y$ and $(X, \Delta)$
is klt. Then,
\begin{enumerate}
\item[a)] $R^jf_*(\omega_X\otimes L)$ is torsion free for $j\geq 0$;
\item[b)] if in addition, $M$ is big and nef, $H^i(Y, R^jf_*(\omega_X\otimes
L))=0$ for all $i>0$ and all $j\geq 0$.
\end{enumerate}
\end{theo}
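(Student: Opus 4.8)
The plan is to reduce the statement to Koll\'ar's original theorems for the dualizing sheaf alone, that is, to the case $L=\cO_X$ and $\Delta=0$. Those two facts --- the derived splitting $Rf_*\omega_X\isom\bigoplus_{j\ge 0}R^jf_*\omega_X[-j]$ on $Y$, and the assertion that each $R^jf_*\omega_X$ is torsion free and has vanishing higher cohomology after twisting by a big and nef class on a projective $Y$ --- are the genuinely deep, Hodge-theoretic input, and I take them as the engine of the proof. All that remains is to move from $\omega_X$ to $\omega_X\otimes L$ in the presence of the fractional klt boundary $\Delta$, and the bridge for this is the Esnault--Viehweg cyclic covering trick.

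First I would normalize the boundary. After a log resolution (whose effect is controlled by Grauert--Riemenschneider vanishing together with the klt discrepancies, so that no part of the sheaf $\omega_X\otimes L$ is lost) I may assume that $\Delta=\sum_i a_i\Delta_i$ has simple normal crossings support with every $a_i\in[0,1)$. Because $L-f^*M\equiv\Delta$, I then choose an integer $N>0$ clearing all the denominators and form the degree-$N$ cyclic cover $\pi:\widetilde Z\to X$ branched along $N\Delta$, associated to an $N$-th root line bundle $\cB\equiv L-f^*M\equiv\Delta$; after normalizing and resolving, $\widetilde Z$ is smooth. The Esnault--Viehweg computation gives
\[
\pi_*\omega_{\widetilde Z}\isom\bigoplus_{k=0}^{N-1}\omega_X\otimes\cB^{k}\otimes\cO_X\big(-\lfloor k\Delta\rfloor\big),
\]
and the crucial point is that the $k=1$ summand is exactly $\omega_X\otimes\cB\equiv\omega_X(\Delta)$, the round-down $\lfloor\Delta\rfloor=0$ vanishing precisely because $(X,\Delta)$ is klt. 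Since we are in characteristic $0$, the trace map splits this summand off canonically.

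Now set $g:=f\circ\pi:\widetilde Z\to Y$ and apply the untwisted Koll\'ar theorems to $g$. Torsion freeness of $R^jg_*\omega_{\widetilde Z}$ passes to the direct summand $R^jf_*(\omega_X\otimes\cB)$, and twisting by the nef class $f^*M$ (projection formula) identifies this with $R^jf_*(\omega_X\otimes L)$, giving part (a). For part (b), I would apply the vanishing on the base $Y$ with $M$ big and nef: since $H^i\big(Y,R^jg_*\omega_{\widetilde Z}\otimes\cO_Y(M)\big)=0$ for $i>0$ and taking the $k=1$ graded piece commutes with $R^jf_*$ and with the Leray spectral sequence of $g=f\circ\pi$, the vanishing descends to $H^i\big(Y,R^jf_*(\omega_X\otimes L)\big)=0$.

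The real obstacle is the positivity and round-off bookkeeping rather than any isolated hard theorem. Two points need care. First, the hypothesis is only \emph{numerical} equivalence $L\equiv f^*M+\Delta$: one must verify that the covering construction and the final vanishing survive with $\equiv$ in place of $\Q$-linear equivalence --- this works because a numerically trivial twist is invisible along the fibers of $f$ and because ``big and nef'' is itself a numerical notion. Second, $f^*M$ is nef but never big on $X$ once $\dim X>\dim Y$, so part (b) cannot come from a naive Kodaira-type vanishing upstairs; the bigness is consumed only after pushing down to $Y$, i.e.\ Koll\'ar's theorem must be invoked on the base with $M$ itself. Keeping the round-down corrections from the log resolution compatible with those in the cyclic cover --- so that the klt condition genuinely isolates $\omega_X\otimes L$ as an honest summand and not a proper subsheaf --- is the delicate, if ultimately routine, part of the argument.
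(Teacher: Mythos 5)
The paper itself contains no proof of this statement: it is quoted verbatim from Koll\'ar (\cite{K}, 10.15), with the remark that it was generalized by Esnault--Viehweg, so your sketch has to be measured against the cited proofs. Your overall plan --- a covering trick reducing the klt-twisted statement to the untwisted Koll\'ar package --- is indeed the standard strategy, but the two points you flag as ``delicate but routine'' are exactly where the argument breaks. First, the cover you describe does not exist in general. Since $\Delta$ has fractional coefficients, there is typically no line bundle $\cB$ with $\cB\equiv\Delta$ at all, let alone one equipped with an isomorphism $\cB^{\otimes N}\simeq\cO_X(N\Delta)$, which is what the degree-$N$ cyclic cover $\mathrm{Spec}\bigoplus_{k=0}^{N-1}\cB^{-k}$ actually requires as input; the hypothesis only yields the numerical relation $NL\equiv Nf^*M+N\Delta$, which carries no gluing data. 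Moreover, since $M$ is merely $\mathbb{Q}$-Cartier, ``$L\otimes f^*\cO(-M)$'' is not a line bundle, so your final step of ``twisting by $f^*M$ via the projection formula'' to pass from $\omega_X\otimes\cB$ back to $\omega_X\otimes L$ is also unavailable. Esnault--Viehweg circumvent the missing roots with Bloch--Gieseker-type coverings, a genuinely more elaborate construction than a single cyclic cover of $X$ branched along $N\Delta$.

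Second, and more fundamentally, the numerical-equivalence issue cannot be disposed of by saying a numerically trivial twist is ``invisible along the fibers of $f$.'' Even granting $\mathbb{Q}$-linear equivalence up to a class $P\in\Pic^\tau(X)$, the torsion part of $P$ can be killed by a further \'etale cover, but the $\Pic^0(X)$-part cannot be removed by any finite cover, and twisting by it changes $R^jf_*(\omega_X\otimes L)$ and its cohomology drastically: the cohomological support loci $V_i$ studied throughout this very paper, and the fact that Lemma \ref{mul} asserts vanishing for every $Q\in\Pic^0(X)$ as a nontrivial statement, show that such twists are anything but invisible. The known proofs absorb this discrepancy by extending Koll\'ar's torsion-freeness, decomposition, and vanishing theorems to $\omega_X$ twisted by unitary flat line bundles (via variations of Hodge structure with unitary coefficients, or Saito's theory); that extension is precisely the extra Hodge-theoretic input hidden behind the joint attribution ``Koll\'ar, Esnault--Viehweg,'' and it is not derivable from the untwisted theorems plus one covering. (A smaller instance of the same slippage: Koll\'ar's original vanishing on the base is for ample twists, so invoking it on $Y$ with $M$ only big and nef already presupposes part of the strengthening being proved.) So the skeleton of your reduction is the right one, but as written it proves the theorem only under the stronger hypothesis $L\sim_{\mathbb{Q}}f^*M+\Delta$ with $M$ Cartier, not the numerical statement actually asserted.
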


\medskip\noindent{\bf Cohomological support loci.} These were first
studied by Green and Lazarsfeld for  the canonical bundle in \cite{GL1} and \cite{GL2},
through their generic vanishing theorems. Simpson
 also contributed to the subject (\cite{S}).

Let $X$ be a smooth projective variety and let $\mathcal {F}$ be a
coherent sheaf on $X$. The cohomological support loci of $\cF$ are
defined as $$V_i(X, \cF)=\{P\in \Pic^0(X)\mid H^i(X, \cF\otimes
P)\neq 0\},$$ which we often write as $V_i(\cF)$.

\medskip\noindent{\bf GV-objects.} These were
first considered by Hacon in \cite{HAC3} and systematically studied
by Pareschi and Popa in \cite{PP}. In this paper, we just need to
consider GV-sheaves with respect to the universal Poincar\'{e} line
bundle.
\begin{defi}A sheaf $\cF$ on $X$ is called a GV-sheaf if $$\codim_{\Pic^0(X)}V_i(\cF)\geq
i$$ for all $i\geq 0$.
\end{defi}
Let $a_X: X\rightarrow A$ be the Albanese map of $X$; then
$\Pic^0(X)$ is isomorphic to the dual abelian variety $\widehat{A}$. Let $M$ be an ample line bundle on
$\widehat{A}$. We denote by $\widehat{M}$ its Fourier-Mukai
transform, which is a locally free sheaf on $A$ (see \cite{Mu}). Let $\phi_M:
\widehat{A}\rightarrow A$ be the standard isogeny induced by $M$;
then $\phi_M^*\widehat{M}^{\vee}\simeq H^0(M)\otimes M$. Consider
the cartesian diagram:
\begin{equation}\label{t1}\CD
  \widehat{X} @>\varphi_M>> X \\
  @V a_{\widehat{X}} VV @V a_X VV  \\
  \widehat{A} @>\phi_M>> A
\endCD
\end{equation}
Hacon proved the following theorem in \cite{HAC3} (it was later
generalized by Pareschi and Popa in \cite{PP} Theorem A):
\begin{theo}\label{PP}Let $\cF$ be a coherent sheaf on a smooth projective variety $X$. If $H^i(\widehat{X},
\varphi_M^*\cF\otimes a_{\widehat{X}}^*M)=0$, for all $i> 0$ and any
sufficiently ample $M$, then $\cF$ is a GV-sheaf.
\end{theo}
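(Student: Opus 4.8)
The plan is to prove this through the Fourier--Mukai transform, reducing it to the Pareschi--Popa characterization of the GV property: a complex on an abelian variety has cohomological support loci of the expected codimension \iff\ the Fourier--Mukai transform of its Grothendieck dual is concentrated in a single cohomological degree. First I would record the elementary but crucial fact that, by the projection formula, $\mathbb{H}^i(A, Ra_{X*}\cF\otimes P)=H^i(X,\cF\otimes a_X^*P)$ for $P\in\Pic^0(X)=\widehat{A}$, so that $V_i(X,\cF)$ coincides with the $i$-th hypercohomology support locus $V^i(\cG^\bullet)$ of the complex $\cG^\bullet:=Ra_{X*}\cF$ on $A$. Thus ``$\cF$ is a GV-sheaf'' is equivalent to the same codimension condition for $\cG^\bullet$, and it suffices to detect the latter.

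Next I would translate the hypothesis into a vanishing statement on $A$. Since $\phi_M$ is an isogeny, hence finite \'{e}tale in characteristic zero, and the square is Cartesian, the morphism $\varphi_M$ is also finite \'{e}tale, and $a_{\widehat{X}}^*\phi_M^*=\varphi_M^*a_X^*$. Tensoring the hypothesis $h^0(M)$ times and invoking $\phi_M^*\widehat{M}^\vee\simeq H^0(M)\otimes M$ gives
\[
H^i(\widehat{X},\varphi_M^*\cF\otimes a_{\widehat{X}}^*M)^{\oplus h^0(M)}\cong H^i\bigl(\widehat{X},\varphi_M^*(\cF\otimes a_X^*\widehat{M}^\vee)\bigr).
\]
Since $\varphi_{M*}\cO_{\widehat{X}}=a_X^*\phi_{M*}\cO_{\widehat{A}}=\bigoplus_{Q\in\Sigma}a_X^*Q$ for a finite subgroup $\Sigma\subset\Pic^0(A)$, the right-hand side equals $\bigoplus_{Q\in\Sigma}H^i(X,\cF\otimes a_X^*(\widehat{M}^\vee\otimes Q))$. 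As $M$ ranges over sufficiently ample bundles (and each $Q$ only twists $\widehat{M}$ by a point of $\Pic^0$), the hypothesis becomes equivalent, again by the projection formula applied to the locally free sheaf $\widehat{M}^\vee$ on $A$, to $\mathbb{H}^i(A,\cG^\bullet\otimes\widehat{M}^\vee)=0$ for all $i>0$ and all sufficiently ample $M$ on $\widehat{A}$.

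The heart of the argument is then to show that this asymptotic vanishing is exactly the GV property of $\cG^\bullet$. For $M$ ample on $\widehat{A}$, Mukai's index theorem gives that $M$ satisfies $\mathrm{IT}_0$, so $R\widehat{S}(M)=\widehat{M}$ is a vector bundle placed in degree $0$. Applying Grothendieck--Serre duality for the Fourier--Mukai transform, I would rewrite $R\Gamma(A,\cG^\bullet\otimes\widehat{M}^\vee)$ as the cohomology on $\widehat{A}$ of the cohomology sheaves of $R\widehat{S}(R\Delta\,\cG^\bullet)$ twisted by $M$, up to the sign isogeny $(-1)^*$ and a shift by $g=\dim A$. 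Because $M$ is arbitrarily ample, Serre vanishing kills all positive-degree cohomology of these twisted sheaves, so the vanishing of $\mathbb{H}^i(A,\cG^\bullet\otimes\widehat{M}^\vee)$ for every large $M$ forces $\mathcal{H}^j\bigl(R\widehat{S}(R\Delta\,\cG^\bullet)\bigr)=0$ for $j\neq g$; that is, $R\widehat{S}(R\Delta\,\cG^\bullet)$ is a single sheaf placed in degree $g$. By the Pareschi--Popa criterion this is equivalent to $\codim_{\Pic^0(A)}V^i(\cG^\bullet)\ge i$ for all $i$, i.e.\ to $\cF$ being a GV-sheaf.

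The main obstacle is this last step: passing from ``$\mathbb{H}^i(A,\cG^\bullet\otimes\widehat{M}^\vee)=0$ for all large $M$'' to the structural statement that $R\widehat{S}(R\Delta\,\cG^\bullet)$ is concentrated in degree $g$. This requires the precise form of Grothendieck--Serre duality for $R\widehat{S}$ (with its shift by $g$ and the $(-1)^*$), the base-change spectral sequence computing the cohomology sheaves of a Fourier--Mukai transform, and a careful argument that ``sufficiently ample'' is strong enough to isolate each cohomology sheaf one degree at a time. The earlier reduction steps, by contrast, are formal consequences of flat base change, the projection formula, and the defining property $\phi_M^*\widehat{M}^\vee\simeq H^0(M)\otimes M$.
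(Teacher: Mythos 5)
The paper gives no internal proof of Theorem \ref{PP} --- it quotes the result from \cite{HAC3} and \cite{PP} --- and your argument is, in substance, exactly the Pareschi--Popa proof of Theorem A in the cited \cite{PP}: reduce via the \'etale cover $\varphi_M$ and the identity $\phi_M^*\widehat{M}^{\vee}\simeq H^0(M)\otimes M$ to the vanishing $\mathbb{H}^i(A,\,Ra_{X*}\cF\otimes\widehat{M}^{\vee})=0$ for $i>0$, then use the Fourier--Mukai/Grothendieck duality exchange together with Serre vanishing to conclude that $R\mathcal{H}om(Ra_{X*}\cF,\cO_A)$ satisfies W.I.T.\ with index $g$, which by the Pareschi--Popa criterion is equivalent to the GV property; this is correct and not circular, since you only invoke the (GV $\Leftrightarrow$ W.I.T.-of-the-dual) part of their Theorem A while deriving the vanishing-to-concentration step yourself. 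One cosmetic remark: your second step need not be an equivalence --- the forward implication, obtained by keeping the trivial summand $Q=\cO_A$ of $\varphi_{M*}\cO_{\widehat{X}}$, is all that the proof uses.
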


Finally, the following elementary lemma from \cite{HAC1} will frequently be used.
\begin{lemm}\label{sim}Let $X$ be a smooth projective variety, let $L$ and $M$
be line bundles on $X$, and let $T\subset \Pic^0(X)$ be a subvariety of dimension $t$. If for some positive
integers $a$ and $b$ and all $P\in T$, we have $h^0(X, L\otimes
P)\geq a$ and $h^0(X, M\otimes P^{-1})\geq b$, then $h^0(X, L\otimes
M)\geq a+b+t-1.$
\end{lemm}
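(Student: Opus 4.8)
The plan is to analyze the multiplication maps on global sections and to reduce everything to the \emph{product lemma} for a single line bundle, then to recover the extra $t$ by letting $P$ vary. I may assume $T$ is irreducible of dimension $t$. For each $P\in T$ the factors $P$ and $P^{-1}$ cancel, so cup product gives a map
$$\mu_P\colon\ H^0(X,L\otimes P)\otimes H^0(X,M\otimes P^{-1})\lra H^0(X,L\otimes M),$$
with image $W_P\subseteq H^0(X,L\otimes M)$, and the assertion is exactly that $\dim\big(\sum_{P\in T}W_P\big)\geq a+b+t-1$. The base case $t=0$ is the classical product lemma: if $s_1,\dots,s_a$ are linearly independent sections of one line bundle and $v_1,\dots,v_b$ linearly independent sections of another on the irreducible $X$, then the $a+b-1$ products $s_1v_1,\dots,s_1v_b,s_2v_1,\dots,s_av_1$ are linearly independent. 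One verifies this by dividing a hypothetical linear relation by $s_1v_1$, passing to the function field $\C(X)$, and using that multiplication by a nonzero section is injective. Choosing $a$- and $b$-dimensional subspaces of $H^0(X,L\otimes P)$ and $H^0(X,M\otimes P^{-1})$ gives $\dim W_P\geq a+b-1$, which is the statement for $t=0$.

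For $t\geq1$ the natural attack is to cut down the ambient variety and induct on $\dim X$. Fix a general $P_0\in T$ and a general member $D=\div(u_0)\in|M\otimes P_0^{-1}|$ with $u_0\neq0$. Since $\cO_X(D)\simeq M\otimes P_0^{-1}$, multiplication by $u_0$ identifies $H^0(X,L\otimes P_0)$ with the sections of $L\otimes M$ vanishing on $D$, so the restriction sequence
$$0\lra H^0\!\big(X,L\otimes M\otimes\cO_X(-D)\big)\lra H^0(X,L\otimes M)\stackrel{\rho}{\lra} H^0\!\big(D,(L\otimes M)|_D\big)$$
peels off a subspace of dimension $\geq a$ and reduces the problem to bounding $\dim\Im(\rho)$ below by $b+t-1$. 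On $D$ the restricted bundles $(L\otimes P)|_D$ and $(M\otimes P^{-1})|_D$ multiply into $(L\otimes M)|_D$ and differ from the $P_0$-bundles by the class $(PP_0^{-1})|_D\in\Pic^0(D)$, so the restricted products $(s|_D)(v|_D)$ lie in $\Im(\rho)$ and are governed by the same setup on $D$ for the family $T'=\{(PP_0^{-1})|_D\}$. Moreover the kernel of $H^0(X,M\otimes P^{-1})\to H^0(D,\cdot)$ is $H^0(X,P_0P^{-1})=0$ for general $P\neq P_0$, so on the $M$-side at least $b$ sections survive restriction; granting that $T'$ still has dimension $t$ and applying the inductive hypothesis on $D$ should then deliver the required $b+t-1$.

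The main obstacle is precisely the survival of sections under restriction to $D$ on the \emph{other} factor: while the $M\otimes P^{-1}$ sections survive, the sections of $L\otimes P$ can all vanish on $D$, so that $h^0\big(D,(L\otimes P)|_D\big)$ drops to $0$ and the inductive step produces nothing — this already happens in the simplest degenerations, e.g. when the relevant restricted class is a nontrivial degree-zero bundle on a curve. Thus the crux is to organize the cutting so that enough sections of \emph{both} bundles survive and so that $T$ injects into $\Pic^0(D)$; this is where the choice of the divisor $D$ (and, if $|M\otimes P_0^{-1}|$ or $\dim X$ is too small, a separate treatment reverting to the base case) must be made carefully. Getting the bookkeeping to close at exactly $a+b+t-1$, without losing a unit in the restriction or double-counting the vanishing sections, is the delicate point I expect to require the most work.
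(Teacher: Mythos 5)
Your proposal is not a proof: the inductive step is left open (you say so yourself), and the base case as written is already wrong. The specific products $s_1v_1,\dots,s_1v_b,s_2v_1,\dots,s_av_1$ need not be linearly independent: on $X=\mathbf{P}^1$ with $L=M=\cO_{\mathbf{P}^1}(2)$ and $s_i=v_i$ running through $x^2,xy,y^2$, one has $s_1v_2=s_2v_1=x^3y$. Dividing a relation by $s_1v_1$ yields an identity of rational functions whose two groups of terms have poles along $\div(v_1)$ and $\div(s_1)$ respectively, and nothing forces the two groups to vanish separately when these divisors share components --- as they do here. The inequality $\dim W_P\geq a+b-1$ is of course true, but proving it requires either bases adapted to a valuation (e.g.\ sections with pairwise distinct vanishing orders at a general point when $\dim X=1$, or a lexicographic valuation in general) or the finiteness argument below. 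As for the inductive step, the obstacles you flag are fatal as stated: $|M\otimes P_0^{-1}|$ may consist of a single rigid divisor $D$ (when $b=1$), all of $H^0(X,L\otimes P)$ may restrict to zero on $D$, and for $\dim X=1$ there is no divisor to cut with at all, so the scheme of ``peeling off $a$ and inducting on $D$'' does not close.

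The paper itself gives no proof; it quotes the lemma from Hacon--Pardini \cite{HAC1}, and their argument sidesteps both of your difficulties, needing neither the product lemma as input nor any induction. Assume $T$ irreducible of dimension $t$ and consider the incidence variety
$$I=\{(P,D_1,D_2)\ \mid\ P\in T,\ D_1\in |L\otimes P|,\ D_2\in |M\otimes P^{-1}|\},$$
whose fiber over each $P\in T$ has dimension at least $(a-1)+(b-1)$ by hypothesis, so that $\dim I\geq t+a+b-2$. Map $I$ to the linear system $|L\otimes M|$ by $(P,D_1,D_2)\mapsto D_1+D_2$. The fibers of this map are finite: an effective divisor $D$ admits only finitely many decompositions $D=D_1+D_2$ into effective divisors (finitely many ways to split the multiplicities of its finitely many components), and $D_1$ determines $P=\cO_X(D_1)\otimes L^{-1}$. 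Hence $\dim|L\otimes M|\geq t+a+b-2$, that is, $h^0(X,L\otimes M)\geq a+b+t-1$. Note that this single dimension count produces exactly what your restriction-to-$D$ induction was trying to manufacture: the $t$ extra sections come from the $t$-dimensional freedom in $P$, captured by the finiteness of the fibers rather than by cutting down $X$, so the questions of sections surviving restriction or of $T$ injecting into $\Pic^0(D)$ never arise --- and for $t=0$ the same count (the image of $\mathbf{P}(H^0(L))\times\mathbf{P}(H^0(M))$ in $|L\otimes M|$ has finite fibers) gives the correct proof of your base case.
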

\section{When is the Albanese map surjective?}\label{s2}
 In this section I use the language of asymptotic multiplier ideal
sheaves. However many of the ideas come from \cite{K}, \cite{HAC1},
and \cite{HAC3}.
\begin{lemm}\label{mul}Suppose that $f: X\rightarrow Y$ is a surjective
morphism between smooth projective varieties, $L$ is a
$\mathbb{Q}$-divisor on $X$, and the Iitaka model of $(X, L)$
dominates $Y$. Assume that $D$ is a nef $\mathbb{Q}$-divisor on $Y$
such that $L+f^*D$ is a divisor on $X$. Then we have$$H^i(Y,
R^jf_*(\cO_X(K_X+L+f^*D)\otimes \cJ(||L||)\otimes Q))=0,$$ for all
$i\geq 1$, $j\geq 0$, and all $Q\in \Pic^0(X)$.
\end{lemm}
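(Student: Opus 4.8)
The plan is to pull everything back to a log resolution on which the asymptotic multiplier ideal becomes explicit, reduce the assertion to a relative Kodaira-type vanishing there, and then invoke Theorem~\ref{kol} for the composed morphism. Since $\cJ(||L||)$ is defined (so $\kappa(X,L)\ge 0$ and the Iitaka model exists), I would first choose $p$ large and divisible so that $pL$ is an integral divisor and $\cJ(||L||)=\cJ(\frac1p|pL|)$, and take a log resolution $\mu:X'\to X$ of $|pL|$, writing $\mu^*(pL)\sim P+N$ where $|P|=|\mu^*pL|_{\mathrm{mov}}$ is base-point-free and $N$ is the (SNC) fixed part. By the realization of the asymptotic multiplier ideal through such a resolution, $\cJ(||L||)=\mu_*\cO_{X'}(K_{X'/X}-\lfloor\frac1p N\rfloor)$, and by local vanishing $R^q\mu_*\cO_{X'}(K_{X'/X}-\lfloor\frac1p N\rfloor)=0$ for $q>0$.

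Next, setting $g:=f\circ\mu:X'\to Y$, I would consider $\cG:=\cO_{X'}(K_{X'}+\mu^*L-\lfloor\frac1p N\rfloor+g^*D)\otimes\mu^*Q$. Writing $K_{X'}=\mu^*K_X+K_{X'/X}$ and combining the projection formula with the local vanishing above gives $R^q\mu_*\cG=0$ for $q>0$ and $\mu_*\cG=\cO_X(K_X+L+f^*D)\otimes\cJ(||L||)\otimes Q$. Hence the Leray spectral sequence for $g=f\circ\mu$ degenerates into $R^jg_*\cG=R^jf_*\big(\cO_X(K_X+L+f^*D)\otimes\cJ(||L||)\otimes Q\big)$, so it suffices to prove $H^i(Y,R^jg_*\cG)=0$ for $i\ge1$, $j\ge0$. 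As $\mathbb Q$-divisors $\mu^*L-\lfloor\frac1p N\rfloor\sim_{\mathbb Q}\frac1p P+\{\frac1p N\}$, so $\cG=\cO_{X'}(K_{X'})\otimes\cL'$ with $\cL'\equiv\frac1p P+\{\frac1p N\}+g^*D+\mu^*Q$, and I would aim to write $\cL'\equiv g^*M+\Delta$ with $M$ big and nef on $Y$ and $(X',\Delta)$ klt, so that Theorem~\ref{kol}~b) applied to $g$ yields the vanishing.

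The hard part, and the only place the hypothesis on the Iitaka model is used, is producing the big and nef class $M$ on $Y$. For $p$ large the free system $|P|$ realizes a model of the Iitaka fibration of $(X,L)$, so after possibly enlarging $X'$ I would obtain a morphism $h:X'\to Z$ onto the Iitaka model with $P=h^*H$ for an ample $H$ on $Z$, and the assumption that the Iitaka model dominates $Y$ supplies a surjective $\pi:Z\to Y$ with $g=\pi\circ h$. Fixing an ample $A$ on $Y$ and small $\eps>0$, the divisor $H-\eps\pi^*A$ stays ample on $Z$, whence $\frac1p P=\frac{\eps}{p}g^*A+\frac1p h^*(H-\eps\pi^*A)$. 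Taking $M:=\frac{\eps}{p}A+D$ (ample plus nef, hence big and nef on $Y$) and a general effective $H'\sim_{\mathbb Q}H-\eps\pi^*A$ with small coefficients whose pullback $\frac1p h^*H'$ is SNC together with $\{\frac1p N\}$, the boundary $\Delta:=\{\frac1p N\}+\frac1p h^*H'$ has SNC support and coefficients $<1$, so $(X',\Delta)$ is klt; since $\mu^*Q\equiv0$, this gives $\cL'\equiv g^*M+\Delta$ as required.

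I expect the main obstacle to be exactly this last extraction: correctly deducing the factorization $g=\pi\circ h$ through the Iitaka model with $P=h^*H$ ample, and then trading part of that relative positivity for a big and nef class genuinely pulled back from $Y$ via $\frac1p P=\frac{\eps}{p}g^*A+\frac1p h^*(H-\eps\pi^*A)$. Once $M$ is in hand and the fractional remainder is organized into a klt boundary, the vanishing on $Y$ is a formal consequence of Theorem~\ref{kol} together with local vanishing, and descending it to $X$ is handled by the Leray degeneration established in the second step.
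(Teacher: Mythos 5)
Your overall architecture is exactly the paper's: realize $\cJ(||L||)=\mu_*\cO_{X'}(K_{X'/X}-\lfloor\frac1p N\rfloor)$ on a log resolution, use local vanishing and the Leray spectral sequence to reduce to a vanishing for $g=f\circ\mu$, and organize the leftover numerically as $g^*(\text{big and nef})+(\text{klt fractional boundary})$ so that Theorem~\ref{kol}~b) applies. The divergence, and the genuine gap, is in the step you yourself flag as the hard part. The hypothesis that the Iitaka model of $(X,L)$ dominates $Y$ furnishes only a dominant \emph{rational} map $Z\dashrightarrow Y$, not a morphism $\pi$ with $g=\pi\circ h$. Moreover, the model of the Iitaka fibration on which $P=h^*H$ with $H$ ample is not at your disposal: it is the image $Z_p$ of the morphism defined by the free system $|P|$, and ``enlarging $X'$'' does not change $Z_p$ (the image of the pulled-back free system is the same closed subvariety of $\mathbf{P}(H^0(X,pL)^{\vee})$), so one cannot blow up upstairs to force $Z_p\to Y$ to be a morphism. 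If instead you resolve the indeterminacy of $Z_p\dashrightarrow Y$ on a higher model, the pullback of $H$ there is merely big and nef, so the assertion that $H-\eps\pi^*A$ stays ample collapses; trading bigness for effectivity via Kodaira's lemma produces $H-\eps\pi^*A\equiv(\text{ample})+E$ with an effective $E$ whose coefficients are fixed and not small, and since $N$, $Z_p$, and hence $E$ all vary with $p$, you cannot make the coefficients of $\frac1p h^*E$ less than $1$ by increasing $p$. Your klt claim for the boundary then no longer follows from Bertini, and the application of Theorem~\ref{kol} is unjustified. (Granting a genuine morphism with $H$ ample, the rest of your bookkeeping would be fine --- indeed slightly cleaner than the paper's, since you split off $\frac{\eps}{p}g^*A$ only numerically, after the round-down has been taken --- so the gap is localized in exactly this factorization.)

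The paper circumvents the base entirely: the domination hypothesis is used only through its elementary consequence that $|tmL-f^*H|\neq\varnothing$ for some $t>0$, with $H$ very ample on $Y$. One then takes general members $B_1\in\mu^*|tmL|$ and $B_2\in\mu^*|tmL-f^*H|$ and forms the convex combination $\frac{kB_1+B_2}{(k+1)tm}$; for $k\gg0$ the inequality $s_i+\frac{c_i}{(k+1)tm}<1$ shows its round-down still equals $\lfloor\frac{1}{tm}\sum_ia_iF_i\rfloor$, i.e.\ still computes $\cJ(||L||)$, while numerically the combination releases the summand $\frac{1}{(k+1)tm}f^*H$, honestly pulled back from $Y$, which together with $D$ gives the big and nef class $M$. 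This one-section, large-$k$ damping device is precisely the ingredient your proposal lacks; once it is substituted for your factorization through the Iitaka model, your remaining steps (local vanishing, Leray degeneration, klt boundary, Koll\'ar vanishing) coincide with the paper's and go through verbatim.
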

\begin{proof}
Let $m> 0$ be such that $mL$ is a divisor and
$\cJ(||L||)=\cJ(\frac{1}{m}|mL|)$ (\cite{Laz}, \S 11.2). Let $H$ be
a very ample divisor on $Y$. By assumption there exists an integer
$t>0$ such that $|tmL - f^*H|$ is non-empty.  Let $\mu:
X^{'}\rightarrow X$ be a log resolution such that:
\begin{eqnarray*}
\mu^*|tm L| &=& |L_1|+\sum_ia_iF_i,\\
\mu^*|tm L -f^*H| &=& |L_2|+\sum_ib_iF_i,
\end{eqnarray*}
where $|L_1|$ and $|L_2|$ are base-point-free, $\sum_ia_iF_i$ and
$\sum_ib_iF_i$ are the fixed divisors, and $\sum_iF_i +
\textmd{Exc}(\mu)$ is a divisor with simple normal crossings (SNC) support. Since
$\cJ(||L||)=\cJ(\frac{1}{m}|m L|)$, we also have
$\cJ(||L||)=\cJ(\frac{1}{tm}|tmL|)$, hence
$$\cJ(||L||)=\mu_*\cO_{X^{'}}\Big(K_{X^{'}/X}-\left\lfloor\frac{\sum_ia_iF_i}{tm}\right\rfloor\Big).$$
 Take
\begin{eqnarray*} B_1 &=& D_1+\sum_ia_iF_i\in \mu^*|tm L|\\
B_2 &=& D_2+\sum_ib_iF_i\in \mu^*|tm L -f^*H|
\end{eqnarray*}
where $D_1\in |L_1|$ and $D_2\in |L_2|$ are general elements, so
that $B_1+B_2$ is a divisor with SNC support. We then show that for
$k>0$ large enough, \begin{equation}\label{2}\left\lfloor
\frac{kB_1+B_2}{(k+1)tm}\right\rfloor=\left\lfloor\frac{\sum_ia_iF_i}{tm}\right\rfloor.\end{equation}
It is obvious that
$\left\lfloor\displaystyle\frac{kB_1+B_2}{(k+1)tm}\right\rfloor=\left\lfloor
\displaystyle\frac{\sum_i(ka_i+b_i)F_i}{(k+1)tm}\right\rfloor$. We write
$\frac{a_i}{tm}=m_i+s_i$ with
$m_i=\left\lfloor\frac{a_i}{tm}\right\rfloor$. Then,
$$\left\lfloor\frac{\sum_ia_iF_i}{tm}\right\rfloor=\sum_im_iF_i.$$ Because $H$ is
very ample on $Y$, we have $b_i\geq a_i$. Write $b_i=a_i+c_i$, with
$c_i\geq 0$. Then,
$$\left\lfloor\sum_i\frac{(ka_i+b_i)}{(k+1)tm}F_i\right\rfloor=\left\lfloor\sum_i\frac{((k+1)a_i+c_i)}{(k+1)tm}F_i\right\rfloor=\left\lfloor\sum_i(m_i+s_i+\frac{c_i}{(k+1)tm})F_i\right\rfloor.$$
Since $0\leq s_i <1$, we can let $k\geq 0$ be large enough such that
$s_i+\frac{c_i}{(k+1)tm}<1$, and this implies (\ref{2}). Then by local
vanishing (\cite{Laz}, Theorem 9.4.1),
\begin{eqnarray}\label{3}&&R^jf_{*}(\cO_X(K_X+ L+f^*D)\otimes
\cJ(||L||)\otimes
Q)\nonumber\\&=&R^j(f\circ\mu)_*\big(\cO_{X^{'}}\big(K_{X^{'}}+
\mu^*L+\mu^*f^*D-\left\lfloor\frac{kB_1+B_2}{(k+1)tm}\right\rfloor+\mu^*Q\big)\big),\end{eqnarray}
for all $j\geq 0$. We also have
\begin{eqnarray*}&
&\mu^*L+\mu^*f^*D-\left\lfloor\frac{kB_1+B_2}{(k+1)tm}\right\rfloor+\mu^*Q\\
&\equiv&
\mu^*L+\mu^*f^*D-\mu^*\frac{kL}{k+1}-\mu^*\frac{L}{k+1}+\mu^*f^*\frac{H}{(k+1)tm}+\left\{\frac{kB_1+B_2}{(k+1)tm}\right\}\\
&\equiv&
\mu^*f^*\frac{H}{(k+1)tm}+\mu^*f^*D+\left\{\frac{kB_1+B_2}{(k+1)tm}\right\}.\end{eqnarray*}
So Theorem \ref{kol} gives us that $$H^i\big(Y,
R^j(f\circ\mu)_* \cO_{X^{'}} (K_{X^{'}}+\mu^*L+\mu^*f^*L-\left\lfloor\frac{kB_1+B_2}{(k+1)tm}\right\rfloor+\mu^*Q  )\big)=0,$$
for all $i\geq 1$, all $j\geq 0$, and all $Q\in \Pic^0(X)$. By (\ref{3}),
this proves the lemma.
\end{proof}
The following lemma is essentially Proposition 2.12 in \cite{HAC1}.
I use Lemma \ref{mul} to make the proof a little bit simpler.
\begin{lemm}\label{ch}Let
$f:X\rightarrow Y$ be a surjective morphism between smooth
projective varieties and assume that the Iitaka model of $X$
dominates $Y$. Fix a torsion element $Q\in \Pic^0(X)$ and an integer $m\geq 2$. Then
$h^0(X, \omega_X^{m}\otimes Q\otimes f^*P)$ is constant for all
$P\in \Pic^0(Y)$.
\end{lemm}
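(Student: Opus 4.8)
The plan is to reduce the computation to the base $Y$ by the projection formula, to apply Lemma~\ref{mul} to a sheaf twisted by an asymptotic multiplier ideal so that Koll\'ar-type vanishing becomes available, and finally to compare this twisted sheaf with $\omega_X^m\otimes Q$ itself. Set $L=(m-1)K_X$; since $m\geq 2$ this is a positive multiple of $K_X$, so its Iitaka model coincides with that of $X$ and hence dominates $Y$ by hypothesis. Write $\cJ:=\cJ(||(m-1)K_X||)$ and $\cG:=\omega_X^m\otimes Q\otimes\cJ$, and note that $\omega_X^m=\cO_X(K_X+L)$, so $\cG\otimes f^*P$ is exactly the sort of sheaf to which Lemma~\ref{mul} applies, with $D=0$ and with the element of $\Pic^0(X)$ there taken to be $Q\otimes f^*P$.

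First I would apply Lemma~\ref{mul} with this data, which yields
$$H^i\big(Y,\ R^jf_*(\cG\otimes f^*P)\big)=0\qquad\text{for all } i\geq 1,\ j\geq 0,\ P\in\Pic^0(Y).$$
By the projection formula $R^jf_*(\cG\otimes f^*P)\simeq R^jf_*\cG\otimes P$, so each sheaf $R^jf_*\cG$ on $Y$ satisfies $H^i(Y,R^jf_*\cG\otimes P)=0$ for all $i\geq 1$ and all $P$. In particular $h^0(Y,f_*\cG\otimes P)=\chi(Y,f_*\cG\otimes P)$, and since $P$ is numerically trivial and $\Pic^0(Y)$ is connected, the Euler characteristic $\chi(Y,f_*\cG\otimes P)=\chi(Y,f_*\cG)$ is independent of $P$. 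Combining this with $H^0(X,\cG\otimes f^*P)=H^0(Y,f_*(\cG\otimes f^*P))=H^0(Y,f_*\cG\otimes P)$ shows that
$$h^0\big(X,\ \omega_X^m\otimes Q\otimes\cJ\otimes f^*P\big)=\chi(Y,f_*\cG)\qquad\text{is constant in }P.$$

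It then remains to remove the multiplier ideal, that is, to establish the ``no loss of sections'' equality $H^0(X,\omega_X^m\otimes Q\otimes f^*P)=H^0(X,\omega_X^m\otimes Q\otimes\cJ\otimes f^*P)$ for every $P$; together with the previous step this gives the lemma. The inclusion $\supseteq$ is automatic from $\cJ\subseteq\cO_X$, and the reverse inclusion is the main obstacle. When the twist $Q\otimes f^*P$ is trivial this is standard: one has $\mathfrak{b}(|mK_X|)\subseteq\cJ(||mK_X||)\subseteq\cJ(||(m-1)K_X||)\subseteq\cO_X$, the middle inclusion by monotonicity (using that $K_X$ is pseudoeffective because $\kappa(X)\geq 0$), and since $H^0(\cO_X(mK_X)\otimes\mathfrak{b}(|mK_X|))=H^0(\cO_X(mK_X))$ all inclusions become equalities. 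The torsion factor $Q$ is then absorbed by passing to the \'etale cyclic cover trivializing it, on which $K_X$ and $\cJ$ pull back compatibly, and by isolating the relevant graded piece of the pushforward. The genuinely delicate point is the factor $f^*P$, which is numerically trivial but in general not torsion: here one uses that twisting by a numerically trivial line bundle does not affect the asymptotic orders of vanishing computing $\cJ$ (both sides vanishing in the degenerate case where the twisted plurigenus drops to zero), which is precisely the content of the asymptotic multiplier ideal machinery of \cite{Laz} exploited in \cite{HAC1}, Proposition~2.12. I expect this uniform-in-$P$ comparison to be the technical heart of the proof, the earlier reductions being formal consequences of Lemma~\ref{mul} and the projection formula.
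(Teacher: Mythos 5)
Your first step is sound and is in fact the same mechanism the paper uses: applying Lemma~\ref{mul} (with $L=(m-1)K_X$, $D=0$, and the $\Pic^0(X)$-twist $Q\otimes f^*P$), the projection formula, and deformation invariance of $\chi$ correctly shows that $h^0\big(X,\omega_X^m\otimes Q\otimes\cJ(||(m-1)K_X||)\otimes f^*P\big)$ is constant in $P$. The gap is exactly where you place it, but your proposed fix does not work. The claim that ``twisting by a numerically trivial line bundle does not affect the asymptotic orders of vanishing computing $\cJ$'' is false in general: asymptotic multiplier ideals are computed from the actual linear series $|k(m-1)K_X|$, not from numerical classes, and $\cJ(||L||)$ and $\cJ(||L+P||)$ can genuinely differ for $P$ numerically trivial but non-torsion. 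The invariance you need holds only for \emph{torsion} twists, where one computes the ideal using multiples divisible by the order of the twist so that the twist disappears from the linear series --- this is precisely the computation in the paper's proof. Moreover, given your first step, the ``no loss of sections'' equality $H^0(\omega_X^m\otimes Q\otimes f^*P)=H^0(\omega_X^m\otimes Q\otimes\cJ\otimes f^*P)$ for \emph{all} $P$ is logically equivalent to the lemma itself (it holds at torsion points, so the constant $\cJ$-twisted value equals the maximal untwisted value), so deferring it to \cite{HAC1}, Proposition~2.12 is circular: that proposition \emph{is} essentially the statement being proved here, and the paper's point is to give it a new proof.

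The missing idea is the paper's maximality-plus-density argument, which is designed precisely to avoid needing the comparison at non-torsion $P$. Choose $P_0\in\Pic^0(Y)$ maximizing $h:=h^0(X,\omega_X^m\otimes Q\otimes f^*P_0)$, take $m$-th roots $Q_1^m=Q$, $P_1^m=P_0$, $P_2^m=P$ with $P$ torsion, and work with the ideal $\cJ(||\omega_X^{m-1}\otimes Q_1^{m-1}\otimes f^*P_1^{m-1}||)$ adapted to $P_0$ (not the untwisted $\cJ(||(m-1)K_X||)$). Lazarsfeld's Theorem~11.1.8 plus monotonicity gives the no-loss equality at $P_0\otimes P$, torsionness of $P_2$ shows the ideal is independent of $P$, and then Lemma~\ref{mul} together with constancy of $\chi$ yields $h^0(X,\omega_X^m\otimes Q\otimes f^*P_0\otimes f^*P)=h$ for every torsion $P$. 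Since $P_0+\{\text{torsion points}\}$ is dense in $\Pic^0(Y)$, upper semicontinuity and the maximality of $h$ force $h^0$ to be constant everywhere. Without the restriction to torsion translates of a maximizing point and the semicontinuity step, your argument cannot close.
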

\begin{proof}We consider $h^0(X,
\omega_X^{m}\otimes Q\otimes f^*P)$ as a function of $P\in
\Pic^0(Y)$. Let $P_0\in \Pic^0(Y)$ be such that $h^0(X,
\omega_X^{m}\otimes Q\otimes f^*P_0)=h$ is maximal. We are going to
prove that $$h^0(X, \omega_X^{m}\otimes Q\otimes f^*P_0\otimes
f^*P)=h,$$ for any torsion $P\in \Pic^0(Y)$. Since
$P_0+\{\textrm{torsion points}\}$ is dense in $\Pic^0(Y)$, we then
deduce the lemma from semicontinuity.

Let $P_1$, $P_2$, and $Q_1$ be such that $P_1^m=P_0$, $P_2^m=P$ and
$Q_1^m=Q$. From the properties of asymptotic multiplier ideal
sheaves (\cite{Laz}, Theorem 11.1.8), we know that
\begin{eqnarray*}& & H^0(X, \omega_X^{m}\otimes Q\otimes
f^*P_0\otimes f^*P)\\&=&H^0\big(X, \omega_X^{m}\otimes Q\otimes
f^*P_0\otimes f^*P\otimes \cJ(||\omega_X^{m}\otimes Q_1^{m}\otimes f^*P_1^{m}\otimes f^*P_2^{m}||)\big)\\
&=&H^0\big(X, \omega_X^{m}\otimes Q\otimes f^*P_0\otimes f^*P\otimes
\cJ(||\omega_X^{m-1}\otimes Q_1^{m-1}\otimes f^*P_1^{m-1}\otimes
f^*P_2^{m-1}||)\big).
\end{eqnarray*}
 Since
$P$ is a torsion point, there exists $N>0$ such that $P^N=\cO_Y$.
For $k>0$ large enough and divisible, we have
\begin{eqnarray*}& &\cJ(||\omega_X^{m-1}\otimes Q_1^{m-1}\otimes
f^*P_1^{m-1}\otimes f^*P_2^{i}||)\\&=&\cJ(
\frac{1}{kN}|(\omega_X^{m-1}\otimes Q_1^{m-1}\otimes
f^*P_1^{m-1}\otimes f^*P_2^{i})^{kN}|)\\&=&\cJ(
||\omega_X^{m-1}\otimes Q_1^{m-1}\otimes
f^*P_1^{m-1}||),\end{eqnarray*} for all $i\geq 0$. Hence we have
\begin{eqnarray*}
&&H^0(X, \omega_X^{m}\otimes Q\otimes f^*P_0\otimes
f^*P)\\&=&H^0\big(X, \omega_X^{m}\otimes Q\otimes f^*P_0\otimes
f^*P\otimes \cJ(||\omega_X^{m-1}\otimes Q_1^{m-1}\otimes
f^*P_1^{m-1}||)\big)\\&=&H^0\big(Y, f_*\big(\omega_X^{m}\otimes
Q_1^{m-1}\otimes f^*P_1^{m-1}\otimes \cJ(||\omega_X^{m-1}\otimes
Q_1^{m-1}\otimes f^*P_1^{m-1}||)\\&&\hskip 2cm{}\otimes Q_1\otimes f^*P_1\otimes
f^*P\big)\big).
\end{eqnarray*} We then apply Lemma \ref{mul} (the Iitaka model of
$(X, \omega_X^{m-1}\otimes Q_{1}^{m-1}\otimes f^*P_1^{m-1})$ dominates $Y$ by
assumption) to get that
$$h^0(X, \omega_X^{m}\otimes Q\otimes f^*P_0\otimes f^*P)= \chi\big(Y,
f_*\big(\omega_X^{m}\otimes Q\otimes \cJ(||\omega_X^{m-1}\otimes
Q_1^{m-1}\otimes f^*P_1^{m-1}||)\big)\big)$$ is the constant $h$.
\end{proof}



\begin{lemm}\label{har}Suppose that $f: X\rightarrow Z$ is an
algebraic fiber space between smooth projective varieties. Assume
that $P_m(X)\neq 0$, for some $m\geq 2$, that $H$ is a big
$\mathbb{Q}$-divisor on $Z$, and that $K$ is a nef
$\mathbb{Q}$-divisor on $Z$ such that $H_1\equiv H+K$ is a big and
nef divisor. Then,
\begin{itemize}
\item[1)]we have \begin{multline*}H^i\big(Z, R^jf_*\big(\cO_X(K_X+(m-1)K_{X/Z}+f^*H_1)\\\otimes \cJ(
||(m-1)K_{X/Z}+f^*H||)\big)\otimes P\big)=0,\end{multline*} for all $i\geq 1$,
$j\geq 0$ and all $P\in \Pic^0(Z)$.
\item[2)] the sheaf
$$f_*\big(\cO_X(K_X+(m-1)K_{X/Z})\otimes\cJ(||(m-1)K_{X/Z}+f^*H||)\big)$$ has rank $P_m(X_z)$, where $X_z$ is a general fiber of
$f$.
\end{itemize}
\end{lemm}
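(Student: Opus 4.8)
The plan is to read 1) off from Lemma \ref{mul} and to prove 2) by computing the rank of the pushforward at a general point of $Z$.

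For 1), set $L:=(m-1)K_{X/Z}+f^*H$ and $D:=H_1-H$. Since $H_1\equiv H+K$ and $K$ is nef, $D\equiv K$ is nef; moreover $L+f^*D=(m-1)K_{X/Z}+f^*H_1$ is an integral divisor, because $K_{X/Z}=K_X-f^*K_Z$ is integral and $H_1$ is a divisor. For any $P\in\Pic^0(Z)$ the projection formula gives $R^jf_*(\cG)\otimes P=R^jf_*(\cG\otimes f^*P)$, so taking $Q=f^*P\in\Pic^0(X)$ in Lemma \ref{mul} yields exactly the vanishing asserted in 1), where $\cG=\cO_X(K_X+(m-1)K_{X/Z}+f^*H_1)\otimes\cJ(||(m-1)K_{X/Z}+f^*H||)$. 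Thus the only hypothesis of Lemma \ref{mul} left to check is that the Iitaka model of $(X,L)$ dominates $Z$. For this, note that $P_m(X)\neq 0$ forces $P_m(X_z)\neq 0$ for a general fibre $X_z$ (otherwise $f_*\omega_{X/Z}^m=0$ and $P_m(X)=0$), whence $\kappa(X_z)\geq 0$ and so $\kappa(X_z,(m-1)K_{X_z})=\kappa(X_z)\geq 0$; since $H$ is big on $Z$, a standard consequence of Viehweg's weak positivity theorem (additivity of the Iitaka dimension of $(m-1)K_{X/Z}$ with that of a big divisor pulled back from the base) then shows that the Iitaka model of $L$ dominates $Z$.

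For 2), the key simplification is the identity $K_X+(m-1)K_{X/Z}=mK_{X/Z}+f^*K_Z$, by which the sheaf in question equals $f_*\big(\cO_X(mK_{X/Z})\otimes\cJ(||(m-1)K_{X/Z}+f^*H||)\big)\otimes\omega_Z$; the twist by $\omega_Z$ does not affect the rank, which I compute at a general point $z\in Z$. By generic flatness and cohomology-and-base-change this rank equals $h^0\big(X_z,\cO_{X_z}(mK_{X_z})\otimes(\cJ|_{X_z})\big)$, where I have used adjunction to get $(mK_{X/Z})|_{X_z}=mK_{X_z}$ and $(f^*H)|_{X_z}\equiv 0$. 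By the restriction theorem for asymptotic multiplier ideals one has $\cJ(||(m-1)K_{X/Z}+f^*H||)|_{X_z}=\cJ(||(m-1)K_{X_z}||)$ for general $z$, and the standard pluricanonical identity $H^0\big(X_z,\cO_{X_z}(mK_{X_z})\otimes\cJ(||(m-1)K_{X_z}||)\big)=H^0\big(X_z,\cO_{X_z}(mK_{X_z})\big)$ (\cite{Laz}, \S 11.2) shows this equals $P_m(X_z)$, as required.

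The main obstacle is the pair of fibrewise statements used in 2): that the asymptotic multiplier ideal $\cJ(||(m-1)K_{X/Z}+f^*H||)$ restricts on a general fibre to the asymptotic multiplier ideal $\cJ(||(m-1)K_{X_z}||)$ of the restriction, and that the latter does not cut down $H^0(X_z,\cO_{X_z}(mK_{X_z}))$. Granting these, together with the Iitaka-model condition feeding Lemma \ref{mul}, both assertions follow formally from Lemma \ref{mul}, the projection formula, and base change.
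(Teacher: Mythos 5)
Your part 1) is fine and is essentially the paper's own argument: Viehweg's weak positivity gives that the Iitaka model of $(X,(m-1)K_{X/Z}+f^*H)$ dominates $Z$ (your reduction $P_m(X)\neq 0\Rightarrow P_m(X_z)\neq 0\Rightarrow\kappa(X_z)\geq 0$ is the right input), and Lemma \ref{mul} with $L=(m-1)K_{X/Z}+f^*H$, $D=H_1-H$ and $Q=f^*P$ then yields the vanishing, exactly as in the paper.

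Part 2), however, has a genuine gap at its central step: there is no ``restriction theorem for asymptotic multiplier ideals'' giving the equality $\cJ(||(m-1)K_{X/Z}+f^*H||)\cdot\cO_{X_z}=\cJ(||(m-1)K_{X_z}||)$ for general $z$. The generic restriction theorem in \cite{Laz} concerns a \emph{fixed} $\mathbb{Q}$-divisor; applying it to a general member of $|k((m-1)K_{X/Z}+f^*H)|$ only shows that the restricted ideal is computed from the \emph{restricted} linear series on $X_z$, which is a subseries of the complete series $|k(m-1)K_{X_z}|$ and may have strictly larger base locus. So what comes for free is only the inclusion $\cJ(||(m-1)K_{X/Z}+f^*H||)\cdot\cO_{X_z}\subseteq\cJ(||(m-1)K_{X_z}||)$, which gives the upper bound $\mathrm{rank}\leq P_m(X_z)$ (also immediate from the inclusion into $f_*\cO_X(mK_X)\otimes\cO_Z(-(m-1)K_Z)$, as in the paper). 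The substance of 2) is the \emph{lower} bound, and that is precisely where your argument is missing an idea: to reverse the inclusion, or even to show that no section of $mK_{X_z}$ is killed by the restricted ideal, one must know that global sections from $X$ restrict onto the fibrewise pluricanonical series — Viehweg's fibrewise surjectivity, which you invoke only for the Iitaka-model condition in 1). The paper's proof uses exactly this statement (surjectivity of $H^0(X,\cO_X(km(m-1)K_{X/Z}+kmf^*H))\to H^0(X_z,\cO_{X_z}(km(m-1)K_{X_z}))$ for suitable $k$, available because $H$ is big), converts it on a log resolution into the fixed-divisor estimate $E_1|_{X'_z}\preceq k(m-1)E_2$ with $E_2$ the fixed divisor of $|mK_{X_z}|$ — so that subtracting $\left\lfloor E_1/(km)\right\rfloor$ removes at most $\left\lfloor\frac{m-1}{m}E_2\right\rfloor\preceq E_2$ and hence costs no sections of $mK_{X_z}$ — and then sandwiches the sheaf between two direct images of rank $P_m(X_z)$. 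Note the paper never claims, nor needs, equality of ideals along the fibre; the loss factor $(m-1)/m<1$ is what makes the comparison work. To repair your write-up you would have to deduce the base-locus comparison from weak positivity at the levels computing both asymptotic ideals — at which point you have reconstructed the paper's argument rather than bypassed it.
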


\begin{proof}The point here is the weak positivity of
$f_*(\omega_{X/Z}^{m-1})$, due to Viehweg (\cite{V1} Theorem 4.1 and
Corollary 7.1, or \cite{K} Proposition 10.2). There are two
conclusions:
\begin{itemize}\item[A.] the Iitaka model of $(X,
(m-1)K_{X/Z}+f^*H)$ dominates $Z$ and
\item[B.] there exists $k>0$ sufficient big and divisible such that
the restriction:
$$H^0(X, \cO_X(km(m-1)K_{X/Z}+kmf^*H))\rightarrow H^0(X_z, \cO_{X_z}(km(m-1)K_{X_z}))$$
is surjective, where $z\in Z$ is a general point.
\end{itemize}
By A, we can directly apply Lemma \ref{mul} to deduce item 1) in the lemma.

We take a log resolution $\t: X^{'}\rightarrow X$ such that the
restriction $\t_z: X^{'}_z\rightarrow X_z$ is also a log resolution
for sufficiently general $z\in Z$ (see \cite{Laz}, Theorem 9.5.35)
and fix such a point $z\in Z$. Set
\begin{itemize}\item $\t^*|km(m-1)K_{X/Z}+kmf^*H|=|L_1|+E_1$,
\item $\t_z^*|mK_{X_z}|=|L_2|+E_2,$
\end{itemize}
where $|L_1|$ and $|L_2|$ are base-point-free, $E_1$ and $E_2$ are
the fixed divisors, and $E_1+\textmd{Exc}(\t)$ has SNC support. We
have \begin{equation}\label{(3)}E_1|_{X_{z}^{'}}\preceq
k(m-1)E_2
\end{equation}
 by  B. Let $f^{'}:
X^{'}\xrightarrow{\t} X\xrightarrow{f} Z$ be the composition of
morphisms. Then $f^{'}$ is flat over a dense Zariski open subset of
$Z$. Hence the sheaf
$$f^{'}_{*}\big(\cO_{X^{'}}(K_{X^{'}}+(m-1)\t^*K_{X/Z}-\left\lfloor\frac{E_1}{km}\right\rfloor)\big)$$
has rank $$h^0\big(X_z^{'},
\cO_{X_z^{'}}\big(mK_{X^{'}_z}-\left\lfloor\frac{E_1}{km}\right\rfloor|_{X_{z}^{'}}\big)\big)=P_m(X_z).$$\\
We have the following inclusions
\begin{eqnarray*}
&
&f_*\t_*\cO_{X^{'}}\Big(K_{X^{'}}+(m-1)\t^*K_{X/Z}-\left\lfloor\frac{E_1}{km}\right\rfloor\Big)\\&\subset&
f_*\big(\cO_X(K_X+(m-1)K_{X/Z})\otimes\cJ(||(m-1)K_{X/Z}+f^*H||)\big)\\&\subset&
f_*(\cO_X(mK_{X}))\otimes\cO_Z(-(m-1)K_Z).
\end{eqnarray*}
Since the latter sheaf has rank $P_m(X_z)$, the middle sheaf
$f_*\big(\cO_X(K_X+(m-1)K_{X/Z})\otimes\cJ(||(m-1)K_{X/Z}+f^*H||)\big)$
also has rank $P_m(X_z)$.
\end{proof}

Under the assumptions of Lemma \ref{har}, we fix a big and
base-point-free divisor $H$. For $n>0$, we set
\begin{eqnarray*}
\cJ_{m-1,n}&=&\cJ(||(m-1)K_{X/Z}+\frac{1}{n}f^*H||)\\
\cF_{m-1, n}&=&f_*\big(\cO_X(K_X+(m-1)K_{X/Z})\otimes \cJ_{m-1,n}).
\end{eqnarray*}
By Lemma \ref{har}, $\cF_{m-1,n}$ has rank $P_m(X_z)>0$. These
sheaves were first considered by Hacon in \cite{HAC3}.

\begin{lemm} We have $\cJ_{m-1,n}\supset \cJ_{m-1,n+1}$ and
there exists $N>0$ such that for any $n\geq N$, one has
$\cF_{m-1,n}=\cF_{m-1,N}.$ We will denote by $\cF_{m-1,H}$ the fixed
sheaf $\cF_{m-1,N}$.
\end{lemm}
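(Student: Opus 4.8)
The statement splits into the nesting of the ideals and the stabilization of the sheaves, and the first feeds into the second. For the nesting I would compare the base ideals of the defining linear systems. As $H$ is base-point-free, so is $f^*H$; hence for $p$ sufficiently divisible $\bigl(\tfrac1n-\tfrac1{n+1}\bigr)p\,f^*H$ is an effective base-point-free integral divisor, and adding a base-point-free divisor to a complete linear system only enlarges its base ideal. Thus the base ideal of $|p((m-1)K_{X/Z}+\tfrac1n f^*H)|$ contains that of $|p((m-1)K_{X/Z}+\tfrac1{n+1}f^*H)|$, and the monotonicity of multiplier ideals under inclusion of ideals (\cite{Laz}, Chapter~9) gives the corresponding inclusion for $\cJ(\tfrac1p\,\cdot\,)$. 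Taking $p$ sufficiently divisible, so that both sides equal their asymptotic multiplier ideals, yields $\cJ_{m-1,n}\supseteq\cJ_{m-1,n+1}$.

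Tensoring by the line bundle $\cO_X(K_X+(m-1)K_{X/Z})$ and applying the left-exact functor $f_*$ turns this into $\cF_{m-1,n}\supseteq\cF_{m-1,n+1}$, so the $\cF_{m-1,n}$ form a descending chain of subsheaves of the coherent sheaf $f_*\cO_X(K_X+(m-1)K_{X/Z})$, each of rank $P_m(X_z)$ by Lemma \ref{har}. Since coherent sheaves satisfy the ascending but not the descending chain condition, stabilization needs an input beyond formal Noetherianity, and the key is a cohomological bound that is uniform in $n$. I would extract it from Lemma \ref{har}(1): fix an ample base-point-free divisor $A$ on $Z$, and for each integer $t\ge0$ apply that lemma with the roles of its ``$H$'' and ``$K$'' played by $\tfrac1n H$ and $(1-\tfrac1n)H+tA$ (which is nef), so that the resulting big and nef divisor is $H_1=H+tA$, independent of $n$. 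The case $j=0$ of its conclusion, together with the projection formula, gives $H^i\bigl(Z,\cF_{m-1,n}\otimes\cO_Z(H+tA)\bigr)=0$ for all $i\ge1$, all $t\ge0$, and all $n$.

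Set $\cG_n:=\cF_{m-1,n}\otimes\cO_Z(H)$. The displayed vanishing says exactly that each $\cG_n$ is $\dim Z$-regular with respect to $A$, with a regularity index independent of $n$; hence $\cG_n\otimes\cO_Z(t_0A)$ is globally generated for the single value $t_0=\dim Z$ and every $n$. On the other hand $\cG_n\supseteq\cG_{n+1}$, so the integers $h^0\bigl(Z,\cG_n\otimes\cO_Z(t_0A)\bigr)$ are non-increasing in $n$ and therefore eventually constant, say for $n\ge N$. For such $n$ the inclusion $\cG_n\otimes\cO_Z(t_0A)\hookrightarrow\cG_N\otimes\cO_Z(t_0A)$ is an isomorphism on global sections, both spaces having the same finite dimension; as both sheaves are generated by those sections, the inclusion is an isomorphism of sheaves. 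Untwisting gives $\cF_{m-1,n}=\cF_{m-1,N}$ for all $n\ge N$, the sheaf I denote $\cF_{m-1,H}$.

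The one genuinely delicate point is the uniformity in $n$ of the vanishing, and hence of the regularity index. A priori the big and nef divisor furnished by Lemma \ref{har} depends on $n$, and for a descending chain of coherent sheaves with no uniform cohomological bound stabilization can fail. The device that removes the difficulty is the choice in the second paragraph, which absorbs the $n$-dependent term $\tfrac1n H$ of the multiplier ideal into the nef part, leaving $H_1=H+tA$ the same for every $n$; once this is in place the rest is the standard interplay of Castelnuovo--Mumford regularity, global generation, and the fact that a non-increasing sequence of non-negative integers is eventually constant.
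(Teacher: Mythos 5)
Your proposal is correct and follows essentially the same route as the paper: the nesting is obtained by comparing the defining linear series after adding the base-point-free divisor $\bigl(\tfrac1n-\tfrac1{n+1}\bigr)f^*H$ (the paper does this via fixed parts on a common log resolution, you via base ideals), and the stabilization uses exactly the paper's uniform-in-$n$ vanishing from Lemma \ref{har} with a fixed big and nef twist $H_1$. Where the paper cites ``Hacon's argument in the proof of Proposition 5.1'' in \cite{HAC3} as a black box, you correctly reconstruct it (Castelnuovo--Mumford regularity, global generation with uniform index, and a non-increasing sequence of section dimensions), so your write-up is a faithful, fully detailed version of the paper's proof.
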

\begin{proof}
We may suppose that $k>0$ is such that the linear series
$|k(n+1)n((m-1)K_{X/Z}+\frac{1}{n}f^*H)|$ and
$|k(n+1)n((m-1)K_{X/Z}+\frac{1}{n+1}f^*H)|$ compute $\cJ_{m-1,n}$
and $\cJ_{m-1,n+1}$, respectively. Let $\t: X^{'}\rightarrow X$ be a
log resolution for both linear series. We can write
\begin{eqnarray*}
\t^*|k(n+1)n(m-1)K_{X/Z}+k(n+1)f^*H|&=&|L_1|+E_1,\\
\t^*|k(n+1)n(m-1)K_{X/Z}+knf^*H|&=&|L_2|+E_2,
\end{eqnarray*}
where $L_1$ and $L_2$ are base-point-free and $E_1$ and $E_2$ are
fixed divisors. Since $H$ is base-point-free, we have $E_2\succeq
E_1$. By the definition of asymptotic multiplier
ideal sheaves, $\cJ_{m-1, n}\supset\cJ_{m-1,n+1}$.\\
Take $H_1$ very ample on $Z$ such that $H_1-H$ is a nef divisor.
Then by Lemma \ref{har}, we have $$H^i(Z,
f_*\big(\cO_X(K_X+(m-1)K_{X/Z})\otimes\cJ_{m-1,n}\big)\otimes
\cO_Z(H_1))=0,$$ for $i\geq 1$. Using Hacon's argument in the proof
of Proposition 5.1 in \cite{HAC3}, there exists $N>0$ such that for
$n\geq N$, the inclusion
\begin{eqnarray*}
&&f_*(\cO_X(K_X+(m-1)K_{X/Z})\otimes\cJ_{m-1,N})\otimes\cO_Z(H_1)\\&\supset&
f_*(\cO_X(K_X+(m-1)K_{X/Z})\otimes\cJ_{m-1,n})\otimes\cO_{Z}(H_1)\end{eqnarray*}
is an equality. This implies that the inclusion
$$f_*(\cO_X(K_X+(m-1)K_{X/Z})\otimes\cJ_{m-1,N})\supset f_*(\cO_X(K_X+(m-1)K_{X/Z})\otimes\cJ_{m-1,n})$$
is again an equality.
\end{proof}

\begin{lemm}\label{3.5}Under the above assumptions, namely $f:X\rightarrow
Z$ is an algebraic fiber space between smooth projective  varieties
and $P_m(X)\neq 0$ with $m\geq 2$, we suppose moreover that $Z$ is
of maximal Albanese dimension and that $H$ is a big and
base-point-free divisor on $Z$ pulled back from $\Alb(Z)$. Then
$\cF_{m-1, H}$ is a nonzero GV-sheaf.
\end{lemm}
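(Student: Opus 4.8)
The plan is to verify the criterion of Theorem~\ref{PP} for the sheaf $\cF_{m-1,H}$ on $Z$, exploiting that $Z$ is of maximal Albanese dimension, so that $\Pic^0(Z)\simeq\widehat A$ with $A=\Alb(Z)$ and the induced map $a_{\widehat Z}$ is generically finite. For a sufficiently ample $M$ on $\widehat A$ I form the standard isogeny $\phi_M\colon\widehat A\to A$ and pull back the Albanese map of $Z$ to obtain an \'etale cover $\varphi_M\colon\widehat Z\to Z$ together with $a_{\widehat Z}\colon\widehat Z\to\widehat A$, exactly as in diagram~(\ref{t1}). I then base-change $f$ along $\varphi_M$, setting $\widehat X=X\times_Z\widehat Z$, which yields an algebraic fiber space $\widehat f\colon\widehat X\to\widehat Z$ together with an \'etale cover $\psi_M\colon\widehat X\to X$. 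By Theorem~\ref{PP} it then suffices to prove that $H^i(\widehat Z,\varphi_M^*\cF_{m-1,H}\otimes a_{\widehat Z}^*M)=0$ for all $i>0$ and all sufficiently ample $M$.

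The first key step is to identify $\varphi_M^*\cF_{m-1,H}$ with the analogous sheaf built from $\widehat f$. Since $\psi_M$ and $\varphi_M$ are \'etale, one has $\psi_M^*K_X=K_{\widehat X}$ and $\psi_M^*K_{X/Z}=K_{\widehat X/\widehat Z}$, the asymptotic multiplier ideal commutes with the \'etale pullback, and flat base change along $\phi_M$ gives $\varphi_M^*f_*(-)=\widehat f_*\psi_M^*(-)$. Writing $\widehat H=\varphi_M^*H$, which is again big and base-point-free and (since $H=a_Z^*H_0$) satisfies $\widehat H=a_{\widehat Z}^*(\phi_M^*H_0)$, this yields, for $n$ past the stabilization threshold,
\[\varphi_M^*\cF_{m-1,H}=\widehat f_*\big(\cO_{\widehat X}(K_{\widehat X}+(m-1)K_{\widehat X/\widehat Z})\otimes\cJ(||(m-1)K_{\widehat X/\widehat Z}+\tfrac{1}{n}\widehat f^*\widehat H||)\big),\]
and by the projection formula the twist $a_{\widehat Z}^*M$ is absorbed as $\widehat f^*a_{\widehat Z}^*M$ inside the pushforward.

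The second step is to apply Lemma~\ref{har}(1) to $\widehat f$, taking the role of $H$ there to be $\tfrac1n\widehat H$ and the role of $H_1$ to be $a_{\widehat Z}^*M$. Because $M$ is ample and $a_{\widehat Z}$ is generically finite, $a_{\widehat Z}^*M$ is big and nef; and $K:=a_{\widehat Z}^*M-\tfrac1n\widehat H=a_{\widehat Z}^*(M-\tfrac1n\phi_M^*H_0)$ is nef once $M-\tfrac1n\phi_M^*H_0$ is nef on $\widehat A$, which holds for $n$ large relative to the fixed $M$. Lemma~\ref{har}(1) then gives $H^p(\widehat Z,R^q\widehat f_*(\cdots)\otimes P)=0$ for all $p\ge1$, $q\ge0$ and $P\in\Pic^0(\widehat Z)$; taking $q=0$ and $P=\cO$ and using the rewriting above produces exactly $H^i(\widehat Z,\varphi_M^*\cF_{m-1,H}\otimes a_{\widehat Z}^*M)=0$ for $i>0$, so $\cF_{m-1,H}$ is a GV-sheaf by Theorem~\ref{PP}. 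Nonvanishing is immediate: by Lemma~\ref{har}(2) the sheaf $\cF_{m-1,H}$ has rank $P_m(X_z)>0$.

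The main obstacle is the compatibility asserted in the second step. One must check carefully that the asymptotic multiplier ideal $\cJ_{m-1,n}$ and its stabilization are genuinely preserved under the \'etale base change $\psi_M$, so that $\varphi_M^*\cF_{m-1,H}$ really is the $\cF$-sheaf attached to $\widehat f$ and the divisor $\widehat H$, and that the stabilization index $n$ can be chosen simultaneously large enough to make the nef decomposition work. The genuinely delicate point is uniformity in $M$: since the isogeny $\phi_M$, and hence the whole base-changed situation, varies with $M$, one must ensure that $M-\tfrac1n\phi_M^*H_0$ can be kept nef as $M$ ranges over the sufficiently ample bundles demanded by Theorem~\ref{PP}. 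Once these points are settled, the vanishing is simply the packaged content of Lemma~\ref{har}.
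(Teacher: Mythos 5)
Your proposal is correct and follows essentially the same route as the paper: verify Hacon's criterion (Theorem~\ref{PP}) by pulling everything back along the \'etale covers of diagram~(\ref{t1}), identify $\varphi_M^*\cF_{m-1,H}$ with the analogous sheaf for $\widehat f$ via invariance of asymptotic multiplier ideals under \'etale pullback plus flat base change, and then apply Lemma~\ref{har} after choosing $n$ large enough that $na_{\widehat Z}^*M-\varphi_M^*H$ is nef, with nonvanishing from Lemma~\ref{har}(2). The ``uniformity in $M$'' worry you flag at the end is not actually an issue, and the paper treats it exactly as your parenthetical already suggests: Theorem~\ref{PP} quantifies over each sufficiently ample $M$ separately, and since $\cF_{m-1,n}=\cF_{m-1,H}$ for \emph{all} $n\geq N$, you may choose $n$ depending on the fixed $M$.
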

\begin{proof}  We apply Theorem \ref{PP}. Let $M$ be any ample divisor on $\Pic^0(Z)$. We have cartesian
diagrams as in (\ref{t1}):
$$\CD
   \widehat{X} @> \upsilon_M>> X\\
  @V \widehat{f} VV @V f VV\\
  \widehat{Z} @>\varphi_M>> Z \\
  @V a_{\widehat{Z}} VV @V a_Z VV  \\
  \Pic^0(Z) @>\phi_M>> \Alb(Z)
\endCD$$
where horizontal maps are \'{e}tale. By Theorem 11.2.16 in
\cite{Laz}, for any $n>0$,
$$\upsilon_M^*\cJ(||(m-1)K_{X/Z}+\frac{1}{n}f^*H||)=\cJ(||(m-1)K_{\widehat{X}/\widehat{Z}}+\frac{1}{n}\widehat{f}^*\varphi_M^*H||),$$
hence by flat base change
\begin{eqnarray*}
&&\varphi_M^*f_*\big(\cO_X(K_X+(m-1)K_{X/Z})\otimes
\cJ(||(m-1)K_{X/Z}+\frac{1}{n}f^*H||)\big)\\&=&\widehat{f}_*\big(\cO_{\widehat{X}}(K_{\XX}+(m-1)K_{\XX/\widehat{Z}})\otimes
\cJ(||(m-1)K_{\widehat{X}/\widehat{Z}}+\frac{1}{n}\widehat{f}^*\varphi_M^*H||)\big).
\end{eqnarray*}
It follows that
$$\varphi_M^*\cF_{m-1,
H}=\widehat{f}_*\big(\cO_{\widehat{X}}(K_{\XX}+(m-1)K_{\XX/\widehat{Z}})\otimes
\cJ(||(m-1)K_{\widehat{X}/\widehat{Z}}+\frac{1}{n}\widehat{f}^*\varphi_M^*H||)\big)$$
for all $n\gg 0$. Since $H$ is a divisor pulled back by $a_Z$, we
can take $n$ such that $n a_{\widehat{Z}}^*M-\varphi_M^*H$ is nef.
Then Lemma \ref{har} gives us the vanishing of $$H^i(\widehat{Z},
\varphi_M^*\cF_{m-1,H}\otimes a_{\widehat{Z}}^*M),$$ for all $i>0$
and we are done.
\end{proof}

\begin{lemm}\label{3.6}In the situation of Lemma \ref{3.5}, denoting by $a_Z: Z\rightarrow A$ the Albanese morphism of
$Z$, we have $R^ja_{Z*}(\cF_{m-1,H})=0$, for all $j>0$. Hence
$$V_i(\cF_{m-1,H})=V_i(a_{Z*}(\cF_{m-1,H})),$$ for all $i\geq 0$.
\end{lemm}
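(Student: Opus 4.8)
The plan is to reduce the vanishing of the higher direct images to a single cohomological vanishing statement on $Z$, and then to extract that statement from Lemma \ref{mul}. First I would reduce $R^ja_{Z*}\cF_{m-1,H}=0$ (for $j>0$) to showing that $H^i(Z,\cF_{m-1,H}\otimes a_Z^*N)=0$ for all $i>0$ and all sufficiently ample $N$ on $A$. Indeed, for such $N$ the Leray spectral sequence for $a_Z$ twisted by $N$ degenerates, since Serre vanishing kills $H^p(A,R^qa_{Z*}\cF_{m-1,H}\otimes N)$ for $p>0$; this gives $H^q(Z,\cF_{m-1,H}\otimes a_Z^*N)=H^0(A,R^qa_{Z*}\cF_{m-1,H}\otimes N)$. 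As $N$ is ample, the sheaf $R^qa_{Z*}\cF_{m-1,H}\otimes N$ is globally generated, so the vanishing of its $H^0$ for $q>0$ forces $R^qa_{Z*}\cF_{m-1,H}=0$.

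To obtain the cohomological vanishing on $Z$, I would apply Lemma \ref{mul} with $Y=Z$, $Q=\cO_X$, and the choices $L=(m-1)K_{X/Z}+\frac{1}{n}f^*H$, so that $\cJ(||L||)=\cJ_{m-1,n}$, and $D=a_Z^*N-\frac{1}{n}H$. Writing $H=a_Z^*H_A$ for a divisor $H_A$ on $A$ (possible since $H$ is pulled back from $\Alb(Z)$), we have $D=a_Z^*(N-\frac{1}{n}H_A)$, which is nef for $n\gg 0$ because $N-\frac{1}{n}H_A$ is ample on $A$. The key cancellation is that $L+f^*D=(m-1)K_{X/Z}+f^*a_Z^*N$ is an honest integral divisor, the fractional term $\frac{1}{n}f^*H$ disappearing. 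The Iitaka model of $(X,L)$ dominates $Z$ by conclusion A of Lemma \ref{har} (which persists with $\frac{1}{n}H$ in place of $H$, since $\frac1n f^*H$ is still big along the base). Hence Lemma \ref{mul} yields $H^i(Z,R^0f_*(\cO_X(K_X+(m-1)K_{X/Z}+f^*a_Z^*N)\otimes \cJ_{m-1,n}))=0$ for $i>0$. Taking $n$ also in the stabilization range so that $\cF_{m-1,n}=\cF_{m-1,H}$, the projection formula identifies $R^0f_*$ of the above with $\cF_{m-1,H}\otimes a_Z^*N$, which is precisely the desired vanishing.

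For the final assertion, from $R^ja_{Z*}\cF_{m-1,H}=0$ (for $j>0$) the Leray spectral sequence degenerates after twisting by any $P\in\Pic^0(A)$, giving $H^i(Z,\cF_{m-1,H}\otimes a_Z^*P)=H^i(A,a_{Z*}\cF_{m-1,H}\otimes P)$ for all $i$. Since $A=\Alb(Z)$, every element of $\Pic^0(Z)$ is of the form $a_Z^*P$, and so this equality of cohomology groups is exactly the equality of support loci $V_i(\cF_{m-1,H})=V_i(a_{Z*}\cF_{m-1,H})$.

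\textbf{Main obstacle.} The delicate point is the simultaneous requirement that $D$ be nef while $L+f^*D$ is an integral divisor. This is what forces the passage from $H$ to $\frac{1}{n}H$ and the use of the hypothesis that $H$ descends from $\Alb(Z)$; it is the only place where the maximal Albanese dimension / pullback assumption genuinely enters. Everything else (the Leray degeneration, Serre vanishing, global generation, and the projection formula) is formal once this bookkeeping is set up correctly.
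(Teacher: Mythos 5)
Your proof is correct and follows essentially the same route as the paper: both reduce $R^ja_{Z*}\cF_{m-1,H}=0$ to the vanishing $H^i(Z,\cF_{m-1,H}\otimes a_Z^*N)=0$ for $N$ sufficiently ample via the Leray spectral sequence and Serre vanishing (the paper argues by contradiction with a well-chosen $H_1$, you argue directly via global generation --- a cosmetic difference), and both obtain that vanishing from the Koll\'ar-type statement, using exactly the point you flag, namely that $H$ descends from $\Alb(Z)$ so that $a_Z^*N-\frac{1}{n}H$ is nef. The only difference is that you rederive the needed case of Lemma \ref{har}(1) directly from Lemma \ref{mul} with $L=(m-1)K_{X/Z}+\frac{1}{n}f^*H$ and $D=a_Z^*(N-\frac{1}{n}H_A)$, whereas the paper simply invokes Lemma \ref{har} with $\frac{1}{n}H$ playing the role of the big $\mathbb{Q}$-divisor; the content is identical.
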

\begin{proof}Suppose that $R^ta_{Z*}(\cF_{m-1,H})\neq 0$ for some $t>0$. Let $H_1$ be a ample divisor
on $A$ such that $$H^k(A,
R^ja_{Z*}(\cF_{m-1,H})\otimes\cO_{A}(H_1))=0$$ for all $k\geq 1$ and
$j\geq 0$ and
$$H^0(A, R^ta_{Z*}(\cF_{m-1,H})\otimes\cO_{A}(H_1))\neq 0.$$
By the Leray spectral sequence, we have $$H^t(Z, \cF_{m-1,H}\otimes
\cO_{Z}(a_Z^*H_1))\neq0.$$Since $H$ is pulled back from $A$, we may
take $H_1$ such that $a_Z^*H_1-H$ is big and nef, then by Lemma
\ref{har}, we have $H^t(Z, \cF_{m-1,H}\otimes\cO_Z(a_Z^*H_1))=0$,
which is a contradiction. Thus $R^ja_{Z*}(\cF_{m-1,H})=0$ for all
$j>0$. For any $P\in \Pic^0(Z)$, we have $H^i(Z, \cF_{m-1,H}\otimes
a_Z^*P)\simeq H^i(A, a_{Z*}(\cF_{m-1,H})\otimes P)$, hence
$V_i(\cF_{m-1,H})=V_i(a_{Z*}(\cF_{m-1,H}))$ for all $i\geq 0$.
\end{proof}
\begin{coro}\label{3.7}The cohomological support $V_0(\cF_{m-1,H})$ is not
empty.
\end{coro}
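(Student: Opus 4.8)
The plan is to deduce the nonemptiness of $V_0(\cF_{m-1,H})$ from the fact, established in Lemma \ref{3.5}, that $\cF_{m-1,H}$ is a nonzero GV-sheaf. First I would recall the basic dimension count for GV-sheaves. By Lemma \ref{3.6} we have $R^ja_{Z*}(\cF_{m-1,H})=0$ for $j>0$ and $V_i(\cF_{m-1,H})=V_i(a_{Z*}(\cF_{m-1,H}))$ for all $i\ge 0$, so we may work with the pushforward $a_{Z*}(\cF_{m-1,H})$ on the abelian variety $A=\Alb(Z)$, a sheaf which is again GV (its support loci coincide with those of $\cF_{m-1,H}$). Since $\cF_{m-1,H}$ is nonzero of rank $P_m(X_z)>0$, its pushforward is a nonzero GV-sheaf on $A$.

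Next I would argue by contradiction: suppose $V_0(\cF_{m-1,H})=\varnothing$, i.e.\ $H^0(A, a_{Z*}(\cF_{m-1,H})\otimes P)=0$ for every $P\in\Pic^0(Z)$. The key tool is the behaviour of Euler characteristics in families parametrized by $\Pic^0(Z)$. For a GV-sheaf $\cG$ on an abelian variety, the Euler characteristic $\chi(A,\cG\otimes P)$ is independent of $P$, and moreover equals $h^0(A,\cG\otimes P)$ for $P$ generic, because the GV condition forces $h^i(A,\cG\otimes P)=0$ for $i>0$ and $P$ outside the codimension-$i$ loci $V_i$. Hence for generic $P$ one has $\chi(A,\cG\otimes P)=h^0(A,\cG\otimes P)$. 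If $V_0$ were empty, then $h^0$ would vanish on a dense open set, forcing $\chi(A,\cG\otimes P)=0$; but by generic vanishing $\chi(A,\cG\otimes P)\ge 0$ with equality only in degenerate cases, and I would need to show this contradicts $\cG\neq 0$.

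The cleaner route, which I would prefer, uses Theorem \ref{PP} together with Serre vanishing directly. Apply the construction of diagram (\ref{t1}) with a sufficiently ample $M$ on $\Pic^0(Z)$: from Lemma \ref{3.5} we already know $H^i(\widehat{Z},\varphi_M^*\cF_{m-1,H}\otimes a_{\widehat{Z}}^*M)=0$ for $i>0$. Since $\cF_{m-1,H}$ is nonzero and $a_{\widehat{Z}}^*M$ is the relevant twist, Serre vanishing gives $H^0(\widehat{Z},\varphi_M^*\cF_{m-1,H}\otimes a_{\widehat{Z}}^*M)\neq 0$ for $M$ sufficiently ample. Now $\phi_M^*\widehat{M}^{\vee}\simeq H^0(M)\otimes M$, so pushing forward along the isogeny $\varphi_M$ and unwinding the Fourier--Mukai identity, the nonvanishing of this global section space translates into $H^0(Z,\cF_{m-1,H}\otimes P)\neq 0$ for some $P\in\Pic^0(Z)$ appearing in the decomposition of $\varphi_{M*}\cO_{\widehat Z}$, which is exactly the statement that $V_0(\cF_{m-1,H})\neq\varnothing$.

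The main obstacle I anticipate is making the last translation precise: one must carefully track how a nonzero section of $\varphi_M^*\cF_{m-1,H}\otimes a_{\widehat{Z}}^*M$ on the étale cover $\widehat{Z}$ descends to a nonzero twisted section on $Z$. The subtlety is that the étale pullback $\varphi_M^*$ and the twist by $a_{\widehat Z}^*M$ must be correctly matched against the Poincaré-bundle decomposition $\varphi_{M*}\cO_{\widehat{Z}}\simeq\bigoplus_{P}P^{-1}$ so that a nonzero $H^0$ upstairs forces a nonzero $H^0(Z,\cF_{m-1,H}\otimes P)$ for at least one $P$; getting the direction of the twist right and ruling out cancellation is where the care is needed. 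Once this bookkeeping is done, nonemptiness of $V_0$ is immediate.
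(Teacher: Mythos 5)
Your proposal has a genuine gap, and in fact both of your routes break down at the same point: the passage from ``nonzero GV-sheaf'' to ``$V_0\neq\varnothing$'' is not formal, and the paper's proof consists precisely of the ingredient you are missing. In your first route you correctly reduce (via Lemma \ref{3.6}) to a nonzero GV-sheaf $\cG=a_{Z*}\cF_{m-1,H}$ on the abelian variety and you get as far as $\chi(\cG\otimes P)=0$ for all $P$; but, as you yourself sense, this does not contradict $\cG\neq 0$: any nontrivial $P_0\in\Pic^0$ is a nonzero GV-sheaf with $\chi=0$ (its $V_0$ is the single point $P_0^{-1}$), so ``$\chi\geq 0$ with equality only in degenerate cases'' is not a usable statement. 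What closes the argument in the paper is Hacon's inclusion chain for GV-sheaves (\cite{HAC3}, Corollary 3.2), $V_0(\cF_{m-1,H})\supset V_1(\cF_{m-1,H})\supset\cdots\supset V_d(\cF_{m-1,H})$, which you never invoke: it upgrades $V_0=\varnothing$ to $V_i=\varnothing$ for \emph{all} $i$, i.e.\ $H^i(A,a_{Z*}\cF_{m-1,H}\otimes P)=0$ for all $i\geq 0$ and all $P$; Mukai's inversion theorem for the Fourier--Mukai transform then forces $a_{Z*}\cF_{m-1,H}=0$, contradicting the facts (which you do have in hand) that $\cF_{m-1,H}$ has positive rank $P_m(X_z)$ and $a_Z$ is generically finite.

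Your preferred second route is worse off: the step you describe as ``bookkeeping'' cannot be carried out. A nonzero section of $\varphi_M^*\cF_{m-1,H}\otimes a_{\widehat{Z}}^*M$ pushes forward, by the projection formula and flat base change, to a nonzero section of $\cF_{m-1,H}\otimes a_Z^*(\phi_{M*}M)$; the decomposition $\varphi_{M*}\cO_{\widehat{Z}}\simeq\bigoplus_P P$ is irrelevant here because the twist $a_{\widehat{Z}}^*M$ is \emph{not} pulled back from $Z$. The sheaf $\phi_{M*}M$ is a semihomogeneous vector bundle of rank $\deg\phi_M=h^0(M)^2$ (its pullback is $\phi_M^*\phi_{M*}M\simeq\bigoplus_{x\in\ker\phi_M}t_x^*M$, a sum of translates of the ample $M$), not a direct sum of elements of $\Pic^0$, so $H^0\bigl(Z,\cF_{m-1,H}\otimes a_Z^*(\phi_{M*}M)\bigr)\neq 0$ yields no $P\in\Pic^0(Z)$ with $h^0(Z,\cF_{m-1,H}\otimes P)\neq 0$. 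If such a formal descent existed, Theorem \ref{PP} and the GV formalism would be superfluous: the implication ``$\mathrm{IT}_0$ with respect to sufficiently ample $M$ on the covers $\widehat{Z}$ implies nonvanishing statements about $\Pic^0$-twists'' is exactly the nontrivial content of Hacon's derived-category theorem, and the paper's proof routes through it (Lemma \ref{3.5}, then the inclusion chain, then Mukai inversion) rather than around it. There is also a subsidiary circularity in your appeal to Serre: ``$M$ sufficiently ample'' must be chosen uniformly, whereas the cover $\widehat{Z}$ and the sheaf $\varphi_M^*\cF_{m-1,H}$ themselves vary with $M$, so the nonvanishing of $H^0(\widehat{Z},\varphi_M^*\cF_{m-1,H}\otimes a_{\widehat{Z}}^*M)$ for the particular $M$'s used in Theorem \ref{PP} would require a separate uniform argument.
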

\begin{proof}By Lemma \ref{3.5}, $\cF_{m-1,H}$ is a GV-sheaf, hence
(\cite{HAC3}, Corollary 3.2)$$V_0(\cF_{m-1,H})\supset
V_1(\cF_{m-1,H})\supset\cdots\supset V_d(\cF_{m-1,H}).$$ If
$V_0(\cF_{m-1,H})$ is empty, $V_i(\cF_{m-1,H})$ is empty for all
$i\geq 0$, hence $$H^i(Z,\cF_{m-1,H}\otimes a_Z^*P)=H^i(A,
a_{Z*}\cF_{m-1,H}\otimes P)=0,$$for all $i\geq 0$. By the properties of the
Fourier-Mukai transform on an abelian variety (see \cite{Mu}),
$a_{Z*}\cF_{m-1,H}=0$. However this is impossible since $a_Z$ is
generically finite and $\cF_{m-1,H}$ is a sheaf with positive rank.
\end{proof}
\begin{theo}\label{hp}Let $X$ be a smooth projective variety. If
$$0<P_m(X)\leq 2m-2,$$ for some $m\geq 2$, the Albanese map $a_X: X\rightarrow
\Alb(X)$ is surjective.
\end{theo}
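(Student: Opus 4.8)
The plan is to prove the contrapositive in sharp form: if $a_X$ is \emph{not} surjective, then $P_m(X)\ge 2m-1$ for every $m\ge 2$, so that the hypothesis $P_m(X)\le 2m-2$ forces surjectivity. First I would reduce to a fibration over a base of maximal Albanese dimension. Since $a_X(X)$ always generates $\Alb(X)$, non-surjectivity means $Y:=a_X(X)$ has dimension $d<q(X)$ yet generates $\Alb(X)$. Taking the Stein factorization $X\xrightarrow{f}Z\to Y$, the morphism $f$ is an algebraic fiber space and $Z$ is of maximal Albanese dimension; moreover $\Alb(Z)\twoheadrightarrow\Alb(X)$, so $\dim a_Z(Z)=d<q(X)\le\dim\Alb(Z)$ and $a_Z$ is generically finite but not surjective. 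Because $\omega_X^m$ restricts to $\omega_{X_z}^m$ on a general fiber, $P_m(X)>0$ forces $f_*\omega_X^m\ne 0$, hence $P_m(X_z)=:r\ge 1$. Thus Lemma \ref{har}, Lemma \ref{3.5} and Corollary \ref{3.7} all apply: $\cF_{m-1,H}$ is a nonzero GV-sheaf of rank $r$ with $V_0(\cF_{m-1,H})\ne\varnothing$, and $f^*\colon\Pic^0(Z)\hookrightarrow\Pic^0(X)$ realizes a subtorus $T$ of dimension $t:=q(Z)\ge q(X)\ge 2$.

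The mechanism for the optimal constant is the section-counting Lemma \ref{sim} applied over the \emph{entire} torus $T=f^*\Pic^0(Z)$, not over a point: the resulting summand $t-1\ge 1$ is exactly what turns the previously known bound $2m-3$ into $2m-1$. Concretely, I would split $\omega_X^m=L\otimes M$ with
\[
L=\omega_{X/Z}^m\otimes f^*\omega_Z^{\,m-c},\qquad M=f^*\omega_Z^{\,c}\qquad(1\le c\le m-1),
\]
so that the relative pluricanonical sheaf sits entirely in $L$, and then obtain uniform lower bounds $h^0(X,L\otimes f^*P)\ge 2(m-c)-1$ and $h^0(X,M\otimes f^*P^{-1})\ge 2c-1$ for every $P\in T$. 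Feeding these into Lemma \ref{sim} gives
\[
P_m(X)=h^0(X,\omega_X^m)\ \ge\ \big(2(m-c)-1\big)+\big(2c-1\big)+t-1\ =\ 2m-3+q(Z)\ \ge\ 2m-1 .
\]

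It remains to produce the two uniform estimates, and here I would first settle the base case $\dim Z=1$, which already exhibits the extremal genus-$2$ behaviour. Then $Z$ is a curve of genus $g=q(Z)\ge 2$. For the $M$-piece, Riemann--Roch on $Z$ gives $h^0(Z,\omega_Z^{\,c}\otimes P^{-1})\ge(g-1)(2c-1)\ge 2c-1$ for all $P$. For the $L$-piece, the weak positivity of $f_*\omega_{X/Z}^m$ (the input of Lemma \ref{har}) makes $\deg\big(f_*\omega_{X/Z}^m\otimes\omega_Z^{\,m-c}\otimes P\big)\ge r(m-c)(2g-2)$, whence Riemann--Roch for this rank-$r$ bundle yields $h^0(X,L\otimes f^*P)\ge r(g-1)\big(2(m-c)-1\big)\ge 2(m-c)-1$. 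The displayed inequality then gives $P_m(X)\ge 2m-3+g\ge 2m-1$, with equality forced precisely when $g=2$ and $r=1$ — that is, for the genus-$2$ curve.

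The main obstacle is upgrading these two per-twist estimates to the case $\dim Z\ge 2$; this is exactly where the GV-theory developed in Lemmas \ref{3.5}--\ref{3.7} is indispensable. Generic vanishing for $\cF_{m-1,H}$ (which is what Lemma \ref{3.5} provides) replaces the curve Riemann--Roch: it gives $h^0\ge\chi$ of the relevant pushforwards uniformly in $P$, while the vanishing $R^ja_{Z*}\cF_{m-1,H}=0$ of Lemma \ref{3.6} lets one transport the computation to $\Alb(Z)$. The delicate point is to bound these Euler characteristics from below by the required constants, showing that the non-surjectivity of $a_Z$ forces enough positivity for the factor $2$ to appear and for the genus-$2$ curve to remain the unique extremal configuration. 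I expect the induction to split according to whether a component of $V_0(\cF_{m-1,H})$ through a torsion point is positive-dimensional — in which case it is a translate of the dual of a proper quotient $\Alb(Z)\to B$, yielding a fibration of $Z$ over a lower-dimensional base of maximal Albanese dimension with non-surjective Albanese map that drives the induction — or isolated, in which case $\chi(\cF_{m-1,H})>0$ must be estimated directly. Controlling the constant through these reductions is the heart of the argument.
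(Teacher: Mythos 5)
Your curve case ($\dim Z=1$) is essentially correct and close in spirit to the paper's treatment (the paper applies Riemann--Roch directly to the nef bundle $f_*(\omega_{X/Z}^m)\otimes\omega_Z^m$ and gets $P_m(X)\ge 2m-1$ in one stroke, without your splitting into $L$ and $M$), but the higher-dimensional case, which is the heart of the theorem, is not proved in your proposal, and the strategy you sketch for it would fail as stated. The step that breaks is the claimed uniform bound $h^0(Z,\omega_Z^{c}\otimes P^{-1})\ge 2c-1$ for \emph{every} $P$ in the full torus $T=f^*\Pic^0(Z)$, which is what you need in order to harvest the summand $t-1=q(Z)-1$ from Lemma \ref{sim}. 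In dimension $\ge 2$ there is no Riemann--Roch positivity: GV-theory only yields $h^0(\cF\otimes P)\ge \chi(\cF)$ uniformly, and $\chi$ can be $0$ even for varieties of general type and maximal Albanese dimension (the Ein--Lazarsfeld examples), so the loci where $h^0>0$ are in general \emph{proper} subvarieties of $\Pic^0(Z)$ and Lemma \ref{sim} cannot be fed the whole torus. You acknowledge this ("the delicate point is to bound these Euler characteristics from below") and offer only an induction you "expect" to work, splitting on whether a component of $V_0(\cF_{m-1,H})$ through a torsion point is positive-dimensional; but controlling the constant through those reductions is precisely the unproved content, not a routine verification.

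The missing idea is Ueno's theorem ([M], Theorem (3.7)), which is the paper's very first step: if $a_X$ is not surjective, then after birational modification and Stein factorization $X$ admits an algebraic fiber space $f:X\to Z$ with $Z$ of \emph{general type}, of maximal Albanese dimension $d>0$, and consequently $P_k(Z)\ge\binom{d+k}{k}$ for all $k\ge 1$. Your reduction only makes $Z$ generically finite onto $a_X(X)$, so $Z$ carries no intrinsic positivity and you are forced into the uniform-twist strategy above. With Ueno's $Z$ in hand, the paper needs just one twist, not uniformity: Corollary \ref{3.7} produces a single $P$ with $h^0(X,\cO_X(K_X+(m-1)K_{X/Z})\otimes f^*P)\ge 1$; multiplying by $H^0(X,\cO_X((m-1)f^*K_Z))$ gives $h^0(X,\cO_X(mK_X)\otimes f^*P)\ge\binom{d+m-1}{m-1}$; and then Lemma \ref{ch} --- a tool your proposal never invokes --- removes the twist $P$ altogether, since the Iitaka model of $(X,K_X)$ dominates the general-type base $Z$. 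For $d\ge 2$ this immediately yields $P_m(X)\ge\binom{m+1}{2}\ge 2m-1$, with no induction and no uniform estimates; only $d=1$ requires Riemann--Roch. So your proposal reproduces the easy extremal case but is missing both structural inputs (Ueno's fibration and the twist-removal Lemma \ref{ch}) that make the general case go through.
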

\begin{proof}If $a_X$ is not surjective,   by Ueno's theorem (\cite{Mo}, Theorem
(3.7)), upon replacing $X$ by a birational model, there exists a
surjective morphism $f_1: X\rightarrow Z_1$ onto a smooth variety
$Z_1$ of general type of dimension $d>0$ such that $Z_1\rightarrow
\Alb(Z_1)$ is a birational map onto its image and $Z_1\rightarrow
\mathbb{P}(H^0(Z_1, \cO_{Z_1}(K_{Z_1})))$ is a map generically finite onto its
image. Obviously, $P_k(Z_1)\geq \binom{d+k}{d}$ for all $k\geq 1$.
Taking the Stein factorization and making birational modifications,
we may suppose that there is an algebraic fiber space $f:
X\rightarrow Z$ such that $Z$ is a smooth variety of general type
and of maximal Albanese dimension $d$, and $P_k(Z)\geq \binom{d+k}{k}$
for all $k\geq 1$.

We let $H$ be a big and base-point-free divisor pulled back by the
Albanese morphism $a_Z: Z\rightarrow \Alb(Z)$. By Corollary
\ref{3.7}, $V_0(\cF_{m-1,H})$ is not empty thus there exists $P\in
\Pic^0(Z)$ such that $h^0(Z, \cF_{m-1,H}\otimes P)\geq 1$. Hence
\begin{equation}\label{nnn}
h^0(X, \cO_X(K_X+(m-1)K_{X/Z})\otimes f^*P)\geq 1.
\end{equation}
On the other
hand, we have $h^0(X, \cO_X((m-1)f^*K_Z))\geq \binom{d+m-1}{m-1}$.
We get
\begin{equation}\label{5}h^0(X, \cO_X(mK_X)\otimes f^*P)\geq
\binom{d+m-1}{m-1}.\end{equation}Since $Z$ is of general type, the
Iitaka model of $(X,K_X)$ dominates $Z$ because of (\ref{nnn}), hence we apply Lemma
\ref{ch} to get $h^0(X,\cO_X(mK_X))\geq \binom{d+m-1}{m-1}$.

If $\dim (Z)=d\geq 2$, then $P_m(X)\geq \binom{m+1}{2}\geq 2m-1$,
which is a contradiction.

If $\dim (Z)=1$, $P_m(X)=h^0(Z,f_*(\omega_{X/Z}^m )\otimes
\omega_Z^{m})$. As in Corollary 3.6 in \cite{V},
$f_*(\omega_{X/Z}^m)$ is a nonzero nef vector bundle on $Z$ hence has nonnegative degree. By the
Riemann-Roch theorem, we obtain $P_m(X)\geq 2m-1$, again a
contradiction.
\end{proof}
\begin{rema}\upshape
The proof follows ideas of Koll\'{a}r's (\cite{K}), later improved
by Hacon and Pardini. Briefly speaking, Koll\'{a}r proved that
$P_m(X)\geq P_{m-2}(Z)$ and Hacon and Pardini used the finite map
$$|(m-2)K_Z+P|\times |K_X+(m-1)K_{X/Z}+K_Z-f^*P|\rightarrow
|mK_X|,$$ where $P\in \Pic^0(Z)$, to give a better estimate of
$P_m(X)$. However, the dimension  $h^0(Z, \cO_Z(kK_Z))$ grows very fast with $k$, so
my starting point was to prove $P_m(X)\geq P_{m-1}(Z)$ by applying
the theory of GV-sheaves.
\end{rema}

\begin{coro}Suppose that $0<P_m(X)<\binom{d+m}{m-1}$ for some $m\geq 2$ and $d\geq 1$.  Then
$\kappa(a_X(X))\leq d$.
\end{coro}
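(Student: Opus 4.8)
The plan is to bound $P_m(X)$ from below in terms of $e:=\kappa(a_X(X))$ and then to compare with the hypothesis. Precisely, I would prove that for every smooth projective $X$ with $P_m(X)>0$,
$$P_m(X)\ \geq\ \binom{e+m-1}{m-1},\qquad e=\kappa(a_X(X)),$$
and deduce the statement from it. Indeed $\binom{d+m}{m-1}=\binom{(d+1)+m-1}{m-1}$, and since $m\geq 2$ the map $x\mapsto\binom{x+m-1}{m-1}$ is a polynomial of degree $m-1\geq 1$, hence strictly increasing on nonnegative integers; so the hypothesis $P_m(X)<\binom{d+m}{m-1}$ together with the displayed bound forces $e<d+1$, that is $\kappa(a_X(X))\leq d$.

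To prove the displayed bound I would repeat the argument of Theorem \ref{hp}, only keeping track of $\dim Z$ instead of deriving a contradiction. If $e=0$ the bound reads $P_m(X)\geq 1$, which is the hypothesis, so I may assume $e\geq 1$. First I would apply Ueno's theorem to the image $a_X(X)\subset\Alb(X)$: after replacing $X$ by a birational model, passing to the Stein factorization and resolving, this produces an algebraic fiber space $f:X\to Z$ with $Z$ smooth of general type and of maximal Albanese dimension, $\dim Z=\kappa(a_X(X))=e$, and $P_k(Z)\geq\binom{e+k}{k}$ for all $k\geq 1$. The key point is that $\dim Z$ equals $\kappa(a_X(X))$ because $Z$ is (a resolution of) the general-type base of Ueno's fibration of $a_X(X)$.

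Next I would fix a big and base-point-free $H$ on $Z$ pulled back from $\Alb(Z)$ and run the GV-sheaf machinery verbatim. By Corollary \ref{3.7}, $V_0(\cF_{m-1,H})\neq\emptyset$, giving $P\in\Pic^0(Z)$ with $h^0(X,\cO_X(K_X+(m-1)K_{X/Z})\otimes f^*P)\geq 1$. Since $f$ is a fiber space, $h^0(X,\cO_X((m-1)f^*K_Z))=P_{m-1}(Z)\geq\binom{e+m-1}{m-1}$; multiplying a fixed nonzero section of $K_X+(m-1)K_{X/Z}+f^*P$ by sections of $(m-1)f^*K_Z$ and using $K_X+(m-1)K_{X/Z}+(m-1)f^*K_Z=mK_X$ yields $h^0(X,\cO_X(mK_X)\otimes f^*P)\geq\binom{e+m-1}{m-1}$. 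Finally, since $Z$ is of general type the Iitaka model of $(X,K_X)$ dominates $Z$, so Lemma \ref{ch} (with trivial torsion part) makes $h^0(X,\cO_X(mK_X)\otimes f^*P)$ independent of $P$, giving $P_m(X)\geq\binom{e+m-1}{m-1}$.

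I expect the only delicate step to be the very first one: correctly identifying $\dim Z$ with $\kappa(a_X(X))$ via Ueno's theorem, and checking that the hypotheses of the GV-sheaf lemmas are met, namely that $Z$ has maximal Albanese dimension and that $\cF_{m-1,H}$ has positive rank. Both hold because $Z$ is by construction the general-type quotient of a subvariety of an abelian variety, exactly as in Theorem \ref{hp}; once this is in place the rest is the same computation, and the numerical comparison of binomial coefficients finishes the proof.
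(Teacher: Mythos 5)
Your proposal is correct and takes essentially the same route as the paper: the paper's proof of this corollary simply cites inequality (\ref{5}) from the proof of Theorem \ref{hp}, i.e.\ $P_m(X)\geq\binom{d+m-1}{m-1}$ where, by Ueno's theorem, $d=\kappa(a_X(X))$, which is exactly the bound you establish before comparing binomial coefficients. Your additional details (handling $e=0$, the strict monotonicity of $x\mapsto\binom{x+m-1}{m-1}$, and rechecking the hypotheses of the GV-sheaf lemmas) are just explicit elaborations of the same argument.
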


\begin{proof}It is just (\ref{5}) in the proof of Theorem \ref{hp}, where by Ueno's theorem
$d$ is the Kodaira dimension of $a_X(X)$.
\end{proof}

\section{When does the Albanese map have connected fibers?}\label{s3}
Ein and Lazarsfeld in \cite{EL} gave another proof of
Kawamata's theorem based on the generic vanishing theorem. Their proof is actually very close to an effective result. With the help of a
proposition of Chen and Hacon, we prove the following:

\begin{theo}\label{1}Let $X$ be a smooth projective variety with
$P_1(X)=P_2(X)=1$. The Albanese map $a_X: X\rightarrow \Alb(X)$ is
an algebraic fiber space.
\end{theo}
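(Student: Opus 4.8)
The plan is to handle surjectivity and connectedness separately. Surjectivity is immediate: applying Theorem \ref{hp} with $m=2$, the hypothesis $0<P_2(X)=1\leq 2=2\cdot 2-2$ already gives that $a_X$ is surjective. It remains to prove that the general fiber of $a_X$ is connected, and I would argue by contradiction.

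First I would take the Stein factorization $a_X:X\xrightarrow{g}Y\xrightarrow{h}A$, where $A=\Alb(X)$, $g$ is an algebraic fiber space, and $h$ is finite of degree $d$; after a birational modification of $X$ and $Y$ I assume $Y$ smooth, and I want to show $d=1$. Functoriality of the Albanese construction gives maps $\Alb(g):A\to\Alb(Y)$ and $\Alb(h):\Alb(Y)\to A$ whose composite is $\Alb(a_X)=\mathrm{id}_A$. Since $g$ is surjective, $\Alb(g)$ is surjective, and since it has the left inverse $\Alb(h)$ it is also injective; hence $\Alb(g)$ is an isomorphism, $\Alb(Y)=A$, and the Albanese map $a_Y$ of $Y$ is finite of degree $d$ (it coincides with $h$ up to an automorphism of $A$). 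Thus $Y$ is of maximal Albanese dimension. If $d\geq 2$, the map $a_Y$ cannot be \'etale, since an \'etale cover of an abelian variety is again an abelian variety, whose Albanese map has degree one; so $a_Y$ is ramified. Moreover $\kappa(Y)\geq 1$: indeed $\kappa(Y)\geq 0$ because $Y$ has maximal Albanese dimension, and $\kappa(Y)=0$ would force $a_Y$ to be an algebraic fiber space by Kawamata's theorem \cite{KA}, hence birational, contradicting $d\geq 2$.

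Since $\kappa(Y)\geq 1$ and $Y$ is of maximal Albanese dimension, the Iitaka fibration of $Y$, composed with $g$ and suitably resolved, produces an algebraic fiber space $f:X\to Z$ onto a smooth variety $Z$ of general type and of maximal Albanese dimension with $\dim Z=\kappa(Y)\geq 1$ (that the Iitaka base is again of maximal Albanese dimension is part of the structure theory of Chen and Hacon, \cite{CH4}). I would then run the machinery of Section \ref{s2} with $m=2$: by Lemma \ref{3.5} the sheaf $\cF_{1,H}$ attached to $f$ is a nonzero GV-sheaf, and by Corollary \ref{3.7} its support locus $V_0(\cF_{1,H})$ is nonempty. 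For $P$ in $V_0(\cF_{1,H})$ a section of $\cF_{1,H}\otimes P$ produces a section of $\cO_X(K_X+K_{X/Z})\otimes f^*P=\omega_X^2\otimes f^*(\omega_Z^{-1}\otimes P)$, while a section of $\omega_Z\otimes P^{-1}$ on $Z$ pulls back to a section of $f^*(\omega_Z\otimes P^{-1})$ on $X$. Setting $L=\omega_X^2\otimes f^*\omega_Z^{-1}$ and $M=f^*\omega_Z$, and applying Lemma \ref{sim} over the pullback to $\Pic^0(X)$ of a subvariety $T\subset V_0(\cF_{1,H})\cap(-V_0(\omega_Z))$, I would obtain $P_2(X)=h^0(X,\omega_X^2)\geq 1+1+\dim T-1$.

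The crux is to arrange that $T$ is positive-dimensional, so that this reads $P_2(X)\geq 2$ and contradicts $P_2(X)=1$. This requires a positive-dimensional subvariety of $\Pic^0(Z)$ over which both $\cF_{1,H}$ and $\omega_Z$ have sections, and it is here that I expect the main difficulty to lie: a priori $V_0(\cF_{1,H})$ could consist of finitely many torsion points, and the same caution applies to $V_0(\omega_Z)$, since a variety of general type and maximal Albanese dimension may have $\chi(\omega_Z)=0$. To control these loci — and in particular to convert the inequality $\dim Z\geq 1$ into $\dim T\geq 1$ — I would invoke the proposition of Chen and Hacon \cite{CH4} on the structure of cohomological support loci of varieties of maximal Albanese dimension, which feeds the positive dimension of the base $Z$ into that of $T$. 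Once $\dim T\geq 1$ is secured, the contradiction $P_2(X)\geq 2>1$ forces $d=1$, i.e. $a_X$ is an algebraic fiber space.
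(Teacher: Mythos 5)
There is a genuine gap, and it sits exactly where you yourself flag the ``main difficulty'': your plan is essentially the section~\ref{s2}/Theorem~\ref{10} machinery specialized to $m=2$, and for $m=2$ that machinery structurally cannot produce a contradiction. Two concrete failures. First, your variety $Z$ need not be of general type: the paper states explicitly, right after Theorem~\ref{1}, that ``we cannot expect the image of the Iitaka fibration of $V$ to be of general type.'' By Kawamata's structure theorem (Theorem~\ref{K2}) only an \'etale cover splits as $\KK\times\WW$ with $\WW$ of general type; the Iitaka base itself is a quotient and can even be an abelian variety (this is the $\kappa(W)=0$ case of Theorem~\ref{10}, realized by Hacon--Pardini-type examples). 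When $Z$ is abelian, $V_0(\omega_Z)=\{\cO_Z\}$ is a single point, so no positive-dimensional $T\subset V_0(\cF_{1,H})\cap(-V_0(\omega_Z))$ can exist, and your count collapses to $P_2(X)\geq 1$, which is no contradiction. Second, even when $Z$ is of general type, Corollary~\ref{3.7} only gives that $V_0(\cF_{1,H})$ is \emph{nonempty}; $\cF_{1,H}$ is a GV-sheaf (Lemma~\ref{3.5}) but not an IT-sheaf of index $0$, because Lemma~\ref{mul} would require a \emph{nef} twist $f^*D$ with $K_X+K_{X/Z}=K_X+L+f^*D$, and here $D$ would have to be $-\frac{1}{n}H$, which is anti-nef. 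So $V_0(\cF_{1,H})$ may be a single torsion point disjoint from $-V_0(\omega_Z)$, and no proposition of Chen--Hacon converts $\dim Z\geq 1$ into $\dim T\geq 1$ for this intersection: the relevant statement (\cite{CH2}, Corollary 2.4, used in Lemma~\ref{4.7}) requires $\kappa>0$ of the base and only controls the $V_0(\omega_Z)$ side, while the paper gets the $\cF$-side uniformly over all of $\Pic^0$ only via the IT-$0$ property, which in turn needs the general-type cover $W_1$ and the eigensheaf analysis of Lemmas~\ref{4.5}, \ref{4.6}, \ref{L1}, \ref{L2}. Even with all of that, the sharpest output is $P_m(X)\geq m-1$, i.e.\ $P_2(X)\geq 1$ for $m=2$: vacuous. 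This is precisely why Theorem~\ref{10} assumes $m\geq 3$.

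The telltale sign is that your argument never uses $P_1(X)=1$, whereas the paper's proof of Theorem~\ref{1} hinges on it and follows a completely different, Hodge-theoretic route: since $P_1(X)=P_2(X)=1$, the origin is an isolated point of $V_0(X,\omega_X)$ (\cite{EL}, Proposition 2.1), hence of $V_0(V,g_*\omega_X)$; by \cite{CH3} (Proposition 2.5) the complex given by cup product with any $0\neq v\in H^1(V,\cO_V)$ on $H^{\bullet}(V,g_*\omega_X)$ is exact, and an Ein--Lazarsfeld-style Koszul argument on $\P(H^1(A,\cO_A))$ shows that $\wedge\, v_1\wedge\cdots\wedge v_d\colon H^0(V,g_*\omega_X)\to H^d(V,g_*\omega_X)$ is an isomorphism. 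Combining this with Koll\'ar's splitting $H^d(X,\omega_X)\simeq\bigoplus_i H^i(V,R^{d-i}g_*\omega_X)$ (\cite{K4}) and Hodge conjugation/Serre duality yields $K_X\succeq g^*K_V$, hence $P_2(X)\geq P_2(V)$; since $V$ is of maximal Albanese dimension and not birational to an abelian variety, $P_2(V)\geq 2$ by Chen--Hacon's characterization of abelian varieties (\cite{CH1}, Theorem 3.2), contradicting $P_2(X)=1$. Your opening steps (surjectivity via Theorem~\ref{hp} with $m=2$, the Stein factorization, $\kappa(Y)\geq 1$ when $\deg\geq 2$) are fine, but the core of the proof needs this $P_1=1$ input; the weak-positivity/GV counting route you propose cannot reach $P_2(X)\geq 2$.
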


\begin{proof}Let $A$ be the Albanese variety of $X$. The Albanese morphism is already surjective by \cite{HAC1}. Suppose that it has non-connected fibers. We start with the Stein
factorization of $a_X$ and, resolving singularities and
indeterminacies, we can assume that $a_X$ admits a factorization
$$
X\xrightarrow{g}V\xrightarrow{b}A,$$
where $b$ is a generically
finite non birational morphism, $g$ is surjective with connected
fibers, $V$ is smooth and projective. Since $a_X$ is the Albanese
morphism of $X$, $V$ is not birational to an abelian variety. Thus
$V$ is of maximal Albanese dimension and by Chen and Hacon's
characterization of abelian varieties (\cite{CH1}, Theorem 3.2), we
have $P_2(V)\geq 2$. We set $\dim (X)=n$ and $\dim (V)=\dim (A)=d$.

Since $P_1(X)=P_2(X)=1$, $0\in V_0(X, \omega_X)$ is an isolated
point (\cite{EL}, Proposition 2.1). Hence $0\in V_0(V, g_*\omega_X)$
is also an isolated point. By Proposition 2.5 in \cite{CH3}, for any
$v\neq0$ in $H^1(V, \cO_V)$, the sequence
$$0\rightarrow H^0(V, g_{*}\omega_X)\xrightarrow{\cup v}H^1(V, g_{*}\omega_X)\rightarrow\cdots \xrightarrow{\cup v}H^d(V, g_{*}\omega_X)\rightarrow 0$$
is exact. Since $b$ is surjective,  we may, through the map $b^*$,
consider $H^1(A, \cO_A)$ as a subspace of $H^1(V, \cO_V)$. Then, as
in the proof of Theorem 3 in \cite{EL}, we have an exact complex of
vector bundles on $\mathbf{P}=\mathbf{P}(H^1(A,
\cO_A))=\mathbf{P}^{d-1}$:
\begin{multline*}
0\rightarrow H^0(V, g_{*}\omega_X)\otimes
\cO_{\mathbf{P}}(-d)\rightarrow H^1(V,
g_{*}\omega_X)\otimes\cO_{\mathbf{P}}(-d+1)\to\cdots\\
\cdots
\rightarrow H^d(V, g_{*}\omega_X)\otimes\cO_{\mathbf{P}} \rightarrow
0.
\end{multline*}
Take $(v_1,\ldots,v_d)$ a basis for $H^1(A, \cO_A)$. By chasing
through the diagram, we obtain that $H^0(V,
g_{*}\omega_X)\xrightarrow{\wedge v_1\wedge\cdots \wedge v_d}H^d(V,
g_{*}\omega_X)$ is an isomorphism.

By Theorem 3.4 in \cite{K4}, $$H^d(X, \omega_X)\simeq
\bigoplus_iH^i(V, R^{d-i}g_*\omega_X).$$ Hence we have
$$\xymatrix@M=5pt{
H^0(V, g_*\omega_X)\ar[rr]_{\simeq}^{\wedge v_1\wedge\cdots
\wedge v_d}\ar[d]^{\simeq}&&H^d(V, g_{*}\omega_X)\ar@{^{(}->}[d]\\
H^0(X, \omega_X)\ar[rr]^{\wedge g^*(v_1\wedge\cdots \wedge
v_d)}&&H^d(X, \omega_X)}$$ By Hodge conjugation and Serre duality
$H^d(X, \omega_X)\simeq H^0(X, \Omega^{n-d}_X)$. We will denote by
$E\subset H^0(X, \Omega_X^{n-d})$ the nonzero subspace corresponding
to $H^d(V, g_{*}\omega_X)\subset H^d(X, \omega_X)$. Let
$(\eta_1,\ldots,\eta_d)$ in $H^0(A, \Omega_A)$ be the conjugate
basis of $(v_1,\ldots,v_d)$. By Serre duality and Hodge conjugation,
we get from the above diagram that $$E\xrightarrow{\wedge
g^*(\eta_1\wedge\cdots\wedge \eta_d)}H^0(X, \omega_X)$$ is an
isomorphism. Since $\eta_1\wedge\cdots\wedge \eta_d$ is a nonzero
section of $K_V$, we have $K_X\succeq g^*K_V$. We deduce $P_2(X)\geq
P_2(V)\geq 2$, which is a contradiction.
\end{proof}

The proof of Theorem \ref{1} is closely related to Green and
Lazarsfeld's generic vanishing theorem, which is Hodge-theoretic.
Meanwhile Theorem \ref{hp} relies heavily on the weak positivity
theorem of Viehweg. It is natural to ask whether we can use the
ideas in section \ref{s2} to prove other criteria to tell when the Albanese
map is an algebraic fiber space.

We again let $A$ be $\Alb(X)$. Suppose that $a_X: X\rightarrow A$ is
surjective but has non-connected fibers. We take the Stein
factorization and obtain that $a_X$ factors as
$X\xrightarrow{g}V\xrightarrow{b}A$ where $V$ is normal and finite
over $A$ with, again $P_2(V)\geq 2$. The problem here is that we
cannot expect the image of the Iitaka fibration of $V$ to be of general type.

Fortunately, a structure theorem for varieties of maximal Albanese
dimension due to Kawamata (Theorem 13 in \cite{KA}) tells us that
the situation is still manageable.
\begin{theo}[Kawamata]\label{K2}Let $b: V\rightarrow A$ be a finite morphism
from a projective normal algebraic variety to an abelian variety.
Then $\kappa(V)\geq 0$ and there are an abelian subvariety $K$ of
$A$, \'{e}tale covers $\VV$ and $\KK$ of $V$ and $K$ respectively, a
projective normal variety $\WW$, and a finite abelian group $G$,
which acts on $\KK$ and faithfully on $\WW$, such that:
\begin{itemize}
\item[(1)] $\WW$ is finite over $A/K$, of general type and of dimension $\kappa(V)$,
\item[(2)] $\VV$ is isomorphic to $\KK\times
\WW$,
\item[(3)] $V=\VV/G=(\KK\times\WW)/G$, where $G$ acts diagonally and freely on $\VV$.
\end{itemize}
\end{theo}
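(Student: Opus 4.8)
The plan is to analyze the Iitaka fibration of $V$ and to show that it is isotrivial with fibers that are translates of a fixed abelian subvariety $K\subset A$; after splitting $A$ up to an isogeny with kernel inside $K$, this fibration should become a genuine product, which gives (2) and (3), while maximality of $K$ forces the base to be of general type, giving (1). Concretely, I first record that $\kappa(V)\geq 0$: since $b$ is finite and $K_A=\cO_A$ is trivial, the ramification formula gives $K_V=b^*K_A+R=R\geq 0$ with $R$ effective, so $\omega_V$ has a nonzero section and in fact $P_1(V)\geq 1$.

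Next I would run the Iitaka fibration. After a resolution, write it as a morphism $h:V'\to W$ with $\dim W=\kappa(V)$. The general fiber $F$ satisfies $\kappa(F)=0$ and, being finite over $A$, is of maximal Albanese dimension; by the characterization of abelian varieties (\cite{CH1}, Theorem 3.2) it is birational to an abelian variety, and its image in $A$ is a translate of a fixed abelian subvariety $K\subset A$. The subvariety $K$ cannot vary with $F$ because $A$ has only countably many abelian subvarieties, and $K$ is the connected component of the stabilizer of the image $Z=b(V)$, which is therefore a union of $K$-cosets. Hence $V\to A\to A/K$ contracts exactly the fibers of $h$ and, after normalizing the base, factors through a finite morphism $W\to A/K$. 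By maximality of $K$ the image of $W$ in $A/K$ has trivial stabilizer, so by Ueno's theorem (\cite{Mo}) it is of general type of dimension $\kappa(V)$; this gives (1).

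For the product structure I would appeal to Poincar\'e complete reducibility to choose an abelian subvariety $K'\subset A$ such that the addition map $K\times K'\to A$ is an isogeny with finite kernel $G$ and $K'\to A/K$ is again an isogeny. Pulling $V$ back along the \'etale isogeny $K\times K'\to A$ produces an \'etale $G$-cover $\VV\to V$ together with a morphism $\VV\to K\times K'$. Since $Z$ is stable under translation by $K$, the subgroup $K$ acts on $\VV$ by translation in the first factor, compatibly with the projection to $K'$; this action straightens the isotrivial fibration and identifies $\VV$ with a product $\KK\times\WW$, where $\KK=K$ is an \'etale cover of $K$ through the isogeny and $\WW$ is the normalization of the preimage of the image of $W$ under $K'\to A/K$. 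The group $G$, being the kernel of an isogeny, acts freely and diagonally, by translation on $\KK$ and by deck transformations on $\WW$, and $\WW$ is finite over $A/K$, of general type and of dimension $\kappa(V)$ as an \'etale cover of the image of $W$. This yields (2) and (3).

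The main obstacle will be this last splitting step: one must show that, after the base change along $K\times K'\to A$, the translation action of $K$ genuinely trivializes the fibration into a direct product $\KK\times\WW$ rather than a nontrivial $K$-torsor, and one must reconcile this with the deck transformations of the possibly ramified cover $V\to Z$, absorbing them into the single finite abelian group $G$. This rests on the rigidity of abelian varieties and on careful descent along the isogeny; the other essential input is the characterization of abelian varieties used to recognize the general Iitaka fiber.
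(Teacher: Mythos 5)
Your skeleton (run the Iitaka fibration, recognize its general fiber as birationally abelian with image a translate of a fixed $K$ by countability, then base change along a Poincar\'e-reducibility isogeny) is the same as the paper's, and the ramification argument for $\kappa(V)\geq 0$ is fine. But your route to item (1) contains a false claim: you assert that $K$ is the connected stabilizer of $Z=b(V)$, and that consequently the image $W_0$ of $W$ in $A/K$ has trivial stabilizer and is of general type by Ueno's theorem. Only the inclusion $K\subset \mathrm{Stab}(Z)^0$ holds. Concretely, let $V=C$ be a genus-$2$ curve with a degree-$2$ cover $b:C\to E=A$ of an elliptic curve (a double cover branched at two points). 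The Iitaka fibration of $C$ is the identity, so $K=0$, while $Z=E$ and $\mathrm{Stab}(Z)^0=E$; the Stein base is $W=C$ and its image in $A/K=E$ is $E$ itself, an abelian variety, not of general type. (The paper's proof of Theorem \ref{10} explicitly confronts this phenomenon: when $b_W$ is surjective and $\kappa(W)=0$, the base $W$ is an abelian variety.) In Kawamata's statement it is $\WW$ --- a multisection-type cover sitting inside $\widetilde{V}$, namely $\widetilde{b}^{-1}(\{k\}\times\widetilde{W_0})$, of degree $\deg\widetilde{b}$ over $\widetilde{W_0}$ --- that is of general type, not the image $W_0$ nor $W$, and the paper can only prove this \emph{after} the splitting, from $\kappa(\WW)=\kappa(\KK\times\WW)=\kappa(\widetilde{V})=\kappa(V)=\dim \WW$. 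Your recipe for $\WW$ (``normalization of the preimage of $W_0$ under $K'\to A/K$'') is accordingly wrong: in the example it yields $E$, whereas $\WW$ must be $C$ itself.

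Second, the step you defer as ``the main obstacle'' is the actual content of the theorem, and the sentence ``since $Z$ is stable under translation by $K$, the subgroup $K$ acts on $\VV$ by translation in the first factor'' is a non sequitur: translation by $K$ acts on the image $Z\times_A(K\times K')$, but $\VV=V\times_A(K\times K')$ is a finite, possibly ramified, cover of it, and a group action downstairs does not lift to a cover; in general only a group \emph{isogenous} to $K$ acts upstairs --- the paper's $\KK$, the general fiber of $\widetilde{h}$, which is an \'etale cover of $K$ of degree $\sharp G_1=\deg\widetilde{b}/\deg\widetilde{b}_W$, not $K$ itself as you write. The recalled proof spends its core on exactly this point: $\widetilde{b}$ is \'etale over $K\times U_0$, so the infinitesimal $K$-action lifts there as vector fields and integrates to a $\KK$-action on $\widetilde{h}^{-1}(U)$, i.e.\ a rational action on $\widetilde{V}$; a rigidity/graph argument (the projection $\widetilde{pr_1}:\widetilde{\Gamma}\to \widetilde{V}$ is finite and birational onto a normal variety, hence an isomorphism) upgrades each $\widetilde{k}\in\KK$ to a biregular automorphism; and only then does the $\KK$-action produce the finite morphism $\KK\times\WW_0\to\widetilde{V}$, the isomorphism $\KK\times\WW_0\simeq \widetilde{V}\times_{\widetilde{W}}\WW_0$, and the quotient presentation $\widetilde{V}=(\KK\times\WW_0)/G_1$, from which $V=(\KK\times\WW)/G$ follows. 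Since you leave this lifting-and-rigidity argument entirely open and your argument for (1) breaks, the proposal is a plausible plan but not a proof.
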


The construction of $\WW$ and $\VV$ is crucial for our purpose so I
will recall the proof of this theorem following Kawamata.

Let $\delta: V^{'}\rightarrow V$ be a birational modification of $V$
such that $V^{'}$ is smooth and there exists a morphism $h^{'}:
V^{'}\rightarrow W^{'}$ such that $W^{'}$ is also smooth and $h^{'}$
is a model of the Iitaka fibration of $V$. Then a general fiber
$V^{'}_{w^{'}}$ of $h^{'}$ is smooth, of Kodaira dimension 0, and
generically finite over an abelian variety, hence by Kawamata's
theorem, $V^{'}_{w^{'}}$ is birational to an abelian
variety and $(b\circ \delta)(V^{'}_{w^{'}})$ is then an abelian
subvariety of $A$, denoted by $K_{w^{'}}$. Since $w^{'}$ moves
continuously, $K_{w^{'}}$ is a translate of a fixed abelian
subvariety $K\subset A$ for every $w^{'}\in W^{'}$. Let $\pi:
A\rightarrow A/K$ be the quotient map.

Consider the Stein factorization $$\pi\circ b:
V\xrightarrow{h}W\xrightarrow{b_W} A/K.$$ Since general fibers of
$h^{'}$ are contracted by $\pi\circ b\circ \delta$, hence by $h\circ
\delta$, the map $h\circ \delta$ factors through $h^{'}$ by
rigidity, and we get the following commutative diagram:
\begin{eqnarray}\label{d6}
\xymatrix@M=5pt@C=40pt
{ V^{'}\ar[r]^{\delta}\ar[d]^{h^{'}}& V \ar[r]^b_{\rm{finite}}\ar[d]^{h}& V_0\ar@{^{(}->}[r]\ar[d]&A\ar[d]^{\pi} \\
W^{'}\ar[r]^{\delta^{'}}& W \ar[r]^{b_W}_{\rm{finite}}&
W_0\ar@{^{(}->}[r]&A/K}
\end{eqnarray}
where $W$ is normal, $b_W$ is finite, $h: V\rightarrow  W$ has
connected fibers, $\delta$ and $\delta^{'}$ are birational, and $V_0$
and $W_0$ are the images of $V$ and $W$ in $A$ and $A/K$
respectively.

By Poincar\'{e} reducibility, there exists an isogeny
$\widetilde{A/K}\rightarrow A/K$ such that $A\times_{A/K}\AA\simeq
K\times \AA$. We then apply the \'{e}tale base change
$(\cdot)\times_{A/K}\AA\rightarrow \cdot$ in the diagram (\ref{d6}) and get
the following commutative diagram:
\begin{eqnarray*}\xymatrix@M=5pt@R=13pt
{
&\widetilde{V}\ar[rr]^-{\widetilde{b}}_-{\rm finite}\ar'[d][dd]_(.3){\widetilde{h}}\ar[dl]&&K\times
\widetilde{W_0}=\widetilde{V_0}\ar'[d][dd]\ar@<-.5ex>@{^{(}->}[rr]\ar@<-.5ex>[dl]&& K\times \AA\ar[dd]\ar[dl]\\
V\ar[dd]_{\begin{matrix}{\rm\scriptstyle fiber}\\{\rm \scriptstyle space}\end{matrix}}^{h}\ar[rr]^(.6){b}\ar[dd]&&V_0\ar@<-.2ex>@{^{(}->}[rr]\ar[dd]&&A\ar[dd]\\
& \widetilde{W}\ar'[r][rr]^(.4){\widetilde{b}_W}_(.4){\rm{finite}}\ar[dl]&& \widetilde{W_0}\ar[dl]\ar@{^{(}->}'[r][rr]&&\AA\ar[dl]\\
W\ar[rr]^{b_W}_{\rm{finite}}&& W_0\ar@<-.5ex>@{^{(}->}[rr]&&A/K}
\end{eqnarray*}
where $\widetilde{W_0}$ is some connected component of the inverse
image of $W_0$ in $\AA$, $\widetilde{V}$ is some connected component
of $V\times_{V_0}\widetilde{V_0}$,   $\widetilde{W}$ is some
connected component of $W\times_{W_0}\widetilde{W_0}$, and all
slanted arrows are \'{e}tale.

Let us look at
\begin{eqnarray*}
\xymatrix{
\widetilde{V}\ar[rr]^{\widetilde{b}}_{\rm{finite}}\ar[d]^{\widetilde{h}}&& K\times\widetilde{W_0}\ar[d]\\
\widetilde{W}\ar[rr]^{\widetilde{b}_W}_{\rm{finite}}&&
\widetilde{W_0}.}
\end{eqnarray*}
A general fiber of $\widetilde{h}$ is an \'{e}tale cover of a
general fiber of $h$ hence an \'{e}tale cover of $K$, thus
isomorphic to an abelian variety $\KK$.

The morphism $\widetilde{b}$ is \'{e}tale over a product $K\times
U_0$ for $U_0$ a dense Zariski open subset of $\widetilde{W_0}$:
$$
\xymatrix@C=40pt@M=5pt{
\widetilde{h}^{-1}(U)\ar[d]_{\rm{smooth}}\ar[r]^{\widetilde{b}} &K\times U_0\ar[d]\\
 U\ar[r]  &  U_0. }$$

The group $K$ acts on $\widetilde{V_0}=K\times \widetilde{W_0}$, and
on $K\times U_0$. The infinitesimal action corresponds to vector
fields, which lift to $\widetilde{b}^{-1}(K\times U_0)$ because
$\widetilde{b}$ is \'{e}tale there.

This induces an action of $\KK$ on
$\widetilde{h}^{-1}(U)=\widetilde{b}^{-1}(K\times U_0)$ hence a
rational action on $\widetilde{V}$. Let $\widetilde{k}\in \KK$ and let $k\in
K$ be its image. Let $\widetilde{\Gamma}\subset
\widetilde{V}\times\widetilde{V}$ and $\Gamma\subset
\widetilde{V_0}\times \widetilde{V_0}$ be the graphs of the actions
of $\widetilde{k}$ and $k$ respectively. We have
\begin{eqnarray}\xymatrix@M=5pt@C=40pt{
 \widetilde{V}\times \widetilde{V}\ar[d]^{(\widetilde{b},\widetilde{b})}
& \widetilde{\Gamma}\ar@<.5ex>@{_{(}->}[l]\ar[r]^{\widetilde{pr_1}}\ar[d]&\widetilde{V}\ar[d]^{\ab}\\
 \widetilde{V_0}\times \widetilde{V_0} & \Gamma\ar@<.5ex>@{_{(}->}[l]\ar[r]^{pr_1} &\widetilde{V_0},}
\end{eqnarray}
where $(\widetilde{b},\widetilde{b})$ is finite and $pr_1$ is an
isomorphism. We see that $\widetilde{pr_1}$ is finite and birational
hence an isomorphism because $\widetilde{V}$ is normal. Thus the
action of $\widetilde{k}$ is an isomorphism. So $\KK$ acts on $\widetilde{V}$
and $\widetilde{b}$ is equivariant for the $\KK$-action on
$\widetilde{V}$ and the $K$-action on $\widetilde{V_0}$.

Set $G_1=\KK/K$.
For $y\in \widetilde{W_0}$ general, we have
$$\widetilde{h}^{-1}\widetilde{b_W}^{-1}(y)=\widetilde{b}_W^{-1}(y)\times \KK=\widetilde{b}^{-1}(K\times\{y\}),$$
hence $$\deg\widetilde{b}=\sharp G_1\cdot \deg\widetilde{b}_W.$$

Set $\WW_0=\widetilde{b}^{-1}(k\times \widetilde{W_0})$ for $k\in K$
general. Then $\WW_0$ is normal and $G_1$ acts on $\WW_0$ ($\WW_0$
may be not connected).
We have a diagram:
\begin{eqnarray*}
\xymatrix{
 \WW_0 \ar[rr]^{\deg\widetilde{b}:1}\ar[d]^{\sharp G_1:1}&& k\times \widetilde{W_0}\ar@{=}[d]\\
\widetilde{W}\ar[rr]^{\deg\widetilde{b}_W:1} && \widetilde{W_0},}
\end{eqnarray*}
hence $\WW_0/G_1=\widetilde{W}$.

Note that $G_1$ acts on $\KK\times \WW_0$ diagonally and freely
(because the action is free on $\KK$). By the $\KK$-action, we have
a morphism $\varphi: \KK\times \WW_0\rightarrow \widetilde{V}$ and
there is a commutative diagram:
\begin{eqnarray*}
\xymatrix@C=40pt@M=5pt{
 \KK\times\WW_0 \ar[d]\ar[r]^(.6){\varphi}& \widetilde{V}\ar[d]^{\widetilde{h}}\\
 \WW_0\ar[r]^{\rm{finite}}& \widetilde{W}.}
\end{eqnarray*}
Thus $\varphi$ is finite because any contracted curve is in some
$\KK\times \widetilde{w}$ but because of the $\KK$-action, this is
impossible.

From the diagram, we have a finite morphism $\KK\times
\WW_0\rightarrow \widetilde{V}\times_{\widetilde{W}}\WW_0$. Since it
is birational over $U$, it is an isomorphism. Hence
$$\widetilde{V}=(\widetilde{V}\times_{\widetilde{W}}\WW_0)/G_1=(\KK\times
\WW_0)/G_1.$$ We then let $\WW$ be a connected component of $\WW_0$
and let $\widetilde{\VV}=\KK\times \WW$. Then $\widetilde{\VV}$ is
still a Galois \'{e}tale cover of $\widetilde{V}$. There exists a
commutative diagram:
\begin{eqnarray*}
\xymatrix@C=40pt@M=5pt@R=15pt{\widetilde{\VV}\ar[r]\ar[d]&\KK\times \widetilde{A/K}\ar[d]\\
\widetilde{V}\ar[r]\ar[d]&K\times \widetilde{A/K}\ar[d]\\
V\ar[r]&A.}
\end{eqnarray*}
We then conclude that $\widetilde{\VV}$ is a connected component of
$V\times_A (\KK\times\widetilde{A/K})$. Let $G_2$ be the finite
abelian group $(\KK\times\widetilde{A/K})/A$. Then
$V=\widetilde{\VV}/G=(\KK\times\WW)/G$, for some quotient group $G$
of $G_2$, where $G$ acts diagonally. Since any quotient of $\KK$ by
a subgroup of $G$ is still an abelian variety, we may assume that
$G$ acts faithfully on $\WW$.

A crucial fact is that $\WW$ is of general type because
$$\kappa(\WW)=\kappa(\widetilde{\VV})=\kappa(V)=\dim (W)=\dim (\WW).$$

We put everything in a commutative diagram:
\begin{eqnarray}\label{d7}
\xymatrix@R=30pt@C=35pt@M=+5pt{
  \widetilde{\VV}=\KK\times\WW\ar[rr]^-{\pi_{\widetilde{V}}}\ar[d]^{\hh=pr_2}\ar@/^2pc/[rrr]^{\rm{Galois\; \rm\acute etale}}&& \widetilde{V} \ar[r]^{\pi_V}\ar[d]^{\widetilde{h}}& V\ar[d]^{h}\ar[r]^{b}_{\rm{finite}}& A\ar[d]^{\pi} \\
 \WW \ar[rr]^{b_{\widetilde{W}}}_{\rm{Galois}}\ar@/_2pc/[rrr]_{b_{\WW}}&& \widetilde{W} \ar[r]^{\pi_W}_{\rm{finite}}& W \ar[r]^{b_W}_{\rm{finite}}& A/K.
}
\end{eqnarray}\\\\
We are now ready to prove the main theorem.

\begin{theo}\label{10}Let $X$ be a smooth projective variety. If $$0<P_m(X)\leq m-2,$$
for some $m\geq 3$, the Albanese map $a_X: X\rightarrow A$ is an
algebraic fiber space.
\end{theo}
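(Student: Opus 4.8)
The plan is to treat surjectivity and connectedness separately, reusing the two halves of the paper. Since $m-2\le 2m-2$ for every $m\ge 3$, Theorem~\ref{hp} already shows that $a_X$ is surjective, so only connectedness is at stake. Following the strategy that opens this section, I would argue by contradiction: assuming the fibers are not connected, I take the Stein factorization $X\xrightarrow{g}V\xrightarrow{b}A$ with $g$ an algebraic fiber space and $b$ finite of degree at least $2$. Exactly as in the proof of Theorem~\ref{1}, $V$ is of maximal Albanese dimension and is not birational to an abelian variety, so Chen--Hacon's characterization (\cite{CH1}) gives $P_2(V)\ge 2$ and therefore $\kappa(V)\ge 1$.

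I would then apply Kawamata's structure theorem~\ref{K2} to $b\colon V\to A$, together with the explicit construction summarized in diagram~(\ref{d7}): after suitable \'etale covers, $\widetilde{\VV}=\KK\times\WW$ is a Galois \'etale cover of $V$ with finite abelian group $G$, where $\KK$ is an abelian variety, $G$ acts freely by translations on $\KK$ and faithfully on $\WW$, and $\WW$ is of general type of dimension $e=\kappa(V)\ge 1$ and finite over $\widetilde{A/K}$. Pulling $g$ back along these covers realizes (an \'etale cover of) $X$ as a fiber space over the general type base $\WW$, the abelian factor $\KK$ sitting inside the fibers. On this fibration over a general type, maximal Albanese dimension base I would run the machinery of Section~\ref{s2}: form the sheaf $\cF_{m-1,H}$ with $H$ pulled back from $\Alb(\WW)$, use Lemma~\ref{3.5} and Corollary~\ref{3.7} to produce a nonzero class in $V_0(\cF_{m-1,H})$, multiply the corresponding section by the $(m-1)$-canonical sections coming from the general type base (Lemma~\ref{sim}), and transport the resulting count to the untwisted plurigenus $P_m(X)$ by Lemma~\ref{ch}. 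The goal of this step is the inequality $P_m(X)\ge m-1$, which contradicts the hypothesis.

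The hard part is exactly the point where the connectedness problem departs from the surjectivity one. In Theorem~\ref{hp} the base $Z$ was forced to be of general type with the strong growth $P_k(Z)\ge\binom{d+k}{k}$, so the fiberwise plurigenus $P_m(X_z)$ --- which by Lemma~\ref{har} is the rank of $\cF_{m-1,H}$ --- was automatically positive and a crude product estimate already gave $P_m(X)\ge 2m-1$. Here the general fiber over $\WW$ has Kodaira dimension $0$ but carries a \emph{torsion} twist of its canonical bundle induced by the $G$-action on the factor $\KK$; consequently $P_m(X_z)$, and hence the rank of $\cF_{m-1,H}$, may vanish for particular values of $m$. This is precisely the phenomenon that lets $P_m(X)$ drop to $m-1$, as in the Hacon--Pardini double cover, and it is why the sharp threshold is $m-2$ rather than $2m-2$. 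Overcoming it requires keeping track of the characters of $G$: one must work at a twist (equivalently, on the \'etale cover $\widetilde{\VV}\to V$) where the fiberwise plurigenus is nonzero, and then show that, after the general type growth of $\WW$ in dimension $e\ge 1$ is distributed across the finitely many torsion twists produced by $G$, the trivial twist still retains at least $m-1$ independent $m$-canonical sections. Pinning down this constant exactly --- using that $\WW$ is genuinely of general type and that $G$ acts freely on $\KK$ and faithfully on $\WW$ --- is the delicate core of the argument, and Lemmas~\ref{sim} and~\ref{ch} are the tools that make the bookkeeping work.
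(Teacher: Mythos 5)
Your outer shell matches the paper exactly --- surjectivity from Theorem \ref{hp}, the Stein factorization $X\xrightarrow{g}V\xrightarrow{b}A$ with $P_2(V)\geq 2$, and Kawamata's Theorem \ref{K2} with the cover $\widetilde{\VV}=\KK\times\WW$ of diagram (\ref{d7}) --- and you correctly diagnose the central obstruction: a priori the sections produced by the Section \ref{s2} machinery live on the \'{e}tale cover, i.e.\ in the torsion twists $\omega_X^m\otimes P_{\chi}$ for $\chi\in G^*$, and since $P_m$ of the cover is the sum $\sum_{\chi}h^0(X,\omega_X^m\otimes P_{\chi})$, a count on the cover cannot by itself contradict $P_m(X)\leq m-2$. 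But at precisely this point your proposal stops being a proof: you write that one ``must show'' the trivial character retains $\geq m-1$ sections, and the tools you name cannot accomplish this. Lemma \ref{ch} only asserts that $h^0(X,\omega_X^m\otimes Q\otimes f^*P)$ is constant in $P\in\Pic^0$ of the base of a fibration \emph{from $X$ itself} whose Iitaka model dominates the base; it does not descend section counts from an \'{e}tale cover down to $X$, and no bookkeeping over $G^*$ with Lemma \ref{sim} is actually carried out --- nor is it plausible that a pure counting argument over the characters could isolate the trivial one, since in the Hacon--Pardini example the nontrivial character absorbs exactly the sections you would need.

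The paper's mechanism, which you are missing, is different: it never fibers $X$ over $\WW$, but over the Stein factor $W$ of $V\to A/K$ (which need \emph{not} be of general type), and works on $X$ itself with the bundle $K_X+(m-1)K_{X/W}+f^*K_W$. Viehweg positivity is descended from the general-type model $W_1$ of $\WW$ through the decomposition $\pi_{X_1*}\cO_{X_1}=\bigoplus_{\chi\in G^*}P_{\chi}$ and a delicate fixed-divisor comparison on \'{e}tale covers of fibers (Lemmas \ref{4.5} and \ref{4.6} --- the latter is the real technical core, proving the pushforward has rank $P_m(X_w)>0$ \emph{untwisted}). Then $\cF_X$ on $A/K$ is IT of index $0$ (Lemmas \ref{mul} and \ref{4.5}), so Fourier--Mukai gives $h^0(A/K,\cF_X\otimes P)\geq 1$ for \emph{every} $P$, as in (\ref{d13}); this is strictly stronger than the nonemptiness of $V_0$ you invoke via Corollary \ref{3.7}, and it is what allows Lemma \ref{sim} to cancel the twists, pairing these sections against $h^0(W,(m-2)K_W\otimes b_W^*P)\geq m-2$ for $P$ in a positive-dimensional torus (Lemma \ref{4.7}) to produce honest sections of $mK_X$ and the contradiction $P_m(X)\geq m-1$. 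Finally, your sketch omits entirely the case $\kappa(W)=0$, where $W$ is an abelian variety and $K_W$ is trivial, so the argument above collapses; the paper needs a further idea there, namely the rank-one sheaf $\cF_{\chi}=h_*(\omega_V\otimes P_{\chi})$ with $V_0(\cF_{\chi})$ a translated positive-dimensional abelian subvariety (obtained from $b_{W_1*}\omega_{W_1}=\bigoplus_{\chi}h_*(\omega_V\otimes P_{\chi})$ and Chen--Hacon), together with the twisted analogues Lemmas \ref{L1} and \ref{L2}. So: right frame, accurate identification of the difficulty, but the core device that resolves it and the unavoidable case split are both absent.
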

\begin{proof}
By Theorem \ref{hp}, $a_X$ is already surjective. Suppose
that it has non-connected fibers. Again we have the Stein
factorization $a_X:
X\xrightarrow{g}V\xrightarrow{b}A$, where $g$ has connected fibers,
$V$ is normal, and $b$ is finite not birational. Applying the above
description of the structure of $V$ in (\ref{d6}) and (\ref{d7}), we get the
following commutative diagram:
\begin{eqnarray}\label{d8}\xymatrix@C=40pt{
X\times_{V}\widetilde{\VV}
\ar[r]^(.6){\pi_X} \ar[d]^{\widehat{g}} & X\ar[d]^{g}\ar[dr]^{a_X}\\
\widetilde{\VV}\ar[r]^{\rm{Galois}}_{\rm\acute etale}\ar[d]^{\widehat{h}} & V\ar[d]^h\ar[r]^b &A\ar[d]^{\pi}\\
\WW\ar[r]^{b_{\WW}} & W\ar[r]^{b_W}&A/K,}
\end{eqnarray}
where $\pi_X$ is \'{e}tale Galois with Galois  group $G$,
$\widetilde{\VV}=\WW\times \KK$, and $\WW$ is of general type.

There exists a dense Zariski open subset $U$ of $W$ such that $U$
and $b_{\WW}^{-1}(U)$ are smooth and $h\circ g$ and
$\widehat{h}\circ\widehat{g}$ are smooth over $U$ and
$b_{\WW}^{-1}(U)$ respectively. Through Hironaka's resolution of
singularities, we can blow up $W$ and $X$ along smooth subvarieties of
$W-U$ and $X-(h\circ g)^{-1}(U)$ respectively and assume that $W$ is
smooth. Similarly, let $W_1$ and $X_1$ be the smooth projective
varieties obtained by blowing-up $\WW$ and
$X\times_V\widetilde{\VV}$ along subvarieties of
$\WW-b_{\WW}^{-1}(U)$ and
$X\times_V\widetilde{\VV}-(b_{\WW}\circ\widehat{h}\circ\widehat{g})^{-1}(U)$
respectively such that we have the following commutative diagram:
\begin{eqnarray}\label{d9}
\xymatrix{ X_1\ar[dr]^{\epsilon}\ar[rr]^{\pi_{X_1}}\ar[dd]_{f_1} &&X\ar[dd]^f\\
& X\times_V\widetilde{\VV}\ar[ur]^{\pi_X}&\\
 W_1\ar[rr]^{b_{W_1}}&&W,}
\end{eqnarray}
where $W_1$ is of general type, $b_{W_1}$ is generically finite and
$\epsilon$ is the blow-up of $X\times_V\widetilde{\VV}$. We write
$$K_{X_1}=\pi_{X_1}^*K_X+E,$$ where $E$ is an effective exceptional divisor for
$\pi_{X_1}$, $f_1(E)$ is a subvariety of $W_1-b_{W_1}^{-1}(U)$,
and
\begin{eqnarray*}\pi_{X_1*}\cO_{X_1}=\pi_{X*}\epsilon_*\cO_{X_1}=\pi_{X*}\cO_{X\times_{V}\VV}=\bigoplus_{\chi\in G^*}P_{\chi},\end{eqnarray*} where
$P_{\chi}\in\Pic^0(X)$ is the torsion line bundle corresponding to
$\chi\in G^*$.

In order to prove the theorem, we will need to treat two
cases, $\kappa(W)>0$ or $\kappa(W)=0$. The strategies of the proofs
are the same so I will treat the first case in detail and explain how
  very similar arguments work for the second case.

\begin{lemm}\label{4.5}Let $X$ be a smooth projective variety with $P_m(X)>0$ for some $m\geq 2$.
Let $f: X\rightarrow W$ be as above. The Iitaka model of $(X,
(m-1)K_{X/W}+f^*K_W)$ dominates W.
\end{lemm}
\begin{proof}
We use the same notation as above. In (\ref{d9}), we already know that
$W_1$ is of general type so by Viehweg's result (see the proof of
Lemma \ref{har}), the Iitaka model of $(X_1,
(m-1)K_{X_1/W_1}+f_1^*K_{W_1})$ dominates $W_1$. On the other hand,
we can write
\begin{eqnarray}\label{d10}&&(m-1)K_{X_1/W_1}+f_1^*K_{W_1}\nonumber\\&=&\pi_{X_1}^*((m-1)K_{X/W}+f^*K_W)-(m-2)f_1^*K_{W_1/W}+(m-1)E.\end{eqnarray}
Since $K_{W_1/W}$ is effective, the Iitaka model of $(X_1,
\pi_{X_1}^*((m-1)K_{X/W}+f^*K_W)+(m-1)E)$ dominates $W_1$. Hence for
any ample divisor $H$ on $W$, there exists $N>0$ such that
$\pi_{X_1}^*\cO_{X}(N((m-1)K_{X/W}+f^*K_W)-f^*H)\otimes\cO_{X_1}(N(m-1)E)$ has a nonzero section. Since $\pi_{X_1*}\cO_{X_1}(N(m-1)E)=\pi_{X_1*}\cO_{X_1}$
is a direct sum of torsion line bundles, there exists $k>0$ such
that $kN((m-1)K_{X/W}+f^*K_W)-kf^*H$ is effective. Therefore the
Iitaka model of $(X, (m-1)K_{X/W}+f^*K_W)$ dominates $W$.
\end{proof}

Since $K_W$ is not necessarily big, we cannot directly apply Lemma
\ref{har}. But we still have:
\begin{lemm}\label{4.6}Under the assumptions of Lemma \ref{4.5}, the sheaf $$f_*(\cO_X(K_X+(m-1)K_{X/W}+f^*K_W)\otimes
 \cJ(||(m-1)K_{X/W}+f^*K_W||))$$ is nonzero, of rank
 $P_m(X_w)$, where $X_w$ is a general fiber of $f$.
\end{lemm}
\begin{proof}We use the diagram (\ref{d9}). Since $W_1$ is of general type, as in Lemma \ref{har}, by Viehweg's result, there exists $k>0$ such
that for $w_1$ a general point of $W_1$ and $X_{w_1}\subset X_1$ the
fiber of $f_1$, the restriction:
$$H^0(X_1, \cO_{X_1}(km(m-1)K_{X_1/W_1}+kmf_1^*K_{W_1}))\rightarrow H^0(X_{w_1}, \cO_{X_{w_1}}(km(m-1)K_{X_{w_1}}))$$
is surjective. Since $K_{W_1/W}\succeq 0$, by (\ref{d10}), we have
\begin{eqnarray*}&&H^0(X_1, \cO_{X_1}(km(m-1)K_{X_1/W_1}+kmf_1^*K_{W_1}))\\ &\subseteq & H^0(X_1, \cO_{X_1}(km(m-1)\pi_{X_1}^*K_{X/W}+km\pi_{X_1}^*f^*K_{W}+km(m-1)E)).\end{eqnarray*}
Since $E$ is $\pi_{X_1}$-exceptional, we conclude that
\begin{eqnarray*}&&|km(m-1)\pi_{X_1}^*K_{X/W}+km\pi_{X_1}^*f^*K_{W}+km(m-1)E|\\&=&
|km(m-1)\pi_{X_1}^*K_{X/W}+km\pi_{X_1}^*f^*K_{W}|+km(m-1)E.\end{eqnarray*}
We also know that $f_1(E)$ is a proper subvariety of $W_1$. These
imply that the restriction:
\begin{multline}\label{d11}H^0(X_1,
\cO_{X_1}(km(m-1)\pi_{X_1}^*K_{X/W}+km\pi_{X_1}^*f^*K_{W}))\\\rightarrow
H^0(X_{w_1}, \cO_{X_{w_1}}(km(m-1)K_{X_{w_1}}))\end{multline} is
surjective.

Set $w=b_{W_1}(w_1)$, and let $X_{w}$ be the fiber of $f$. In the
following diagram
$$
\xymatrix{ \pi_{X_1}^{-1}f^{-1}(U)\ar[r]\ar[d]&f^{-1}(U)\ar[d]\\
b_{W_1}^{-1}(U)\ar[r]&U,}$$ all the morphisms are smooth. Hence
$\pi_{X_{w_1}}=\pi_{X_1}|_{X_{w_1}}: X_{w_1}\rightarrow X_w$ is
\'{e}tale and the pull-back of
$H^0(X_w,\cO_{X_{w}}(km(m-1)K_{X_{w}}))$ is a subspace of
$H^0(X_{w_1},\cO_{X_{w_1}}(km(m-1)K_{X_{w_1}}))$.

On the other side, we have \begin{eqnarray}\label{d12}&&H^0(X_1,
\cO_{X_1}(k(m-1)\pi_{X_1}^*K_{X/W}+k\pi_{X_1}^*f^*K_{W}))\nonumber\\&=&\bigoplus_{\chi\in
G^*}\pi_{X_1}^*H^0(X, \cO_{X}(k(m-1)K_{X/W}+kf^*K_{W})\otimes
P_{\chi}).\end{eqnarray}Let $M$ be the order of $G$. Take a
resolution $\t: X^{'}\rightarrow X$ such that $\t:
X^{'}_w\rightarrow X_w$ is also a resolution and
\begin{itemize}
\item $\t^*|Mkm(m-1)K_{X/W}+Mkmf^*K_{W}|=|H|+E_M$,
\item $\t^*|\cO_{X}(km(m-1)K_{X/W}+kmf^*K_{W})\otimes
P_{\chi}|=|H_{\chi}|+E_{\chi}$, for each $\chi\in G^*$,
\item $\t^*|km(m-1)K_{X_{w}}|=|H_{w}|+E_{w}$,
\item $\t^*|mK_{X_w}|=|H_w^{'}|+E_w^{'}$,
\end{itemize}
such that $H$, $H_{\chi}$, $H_w$, and $H_w^{'}$ are base-point-free
and $E_M$, $E_{\chi}$, $E_w$, $E_w^{'}$ are the fixed divisors, with
SNC supports.

Let $X_1^{'}$ be a smooth model of the main component of
$X_{1}\times_{X}X^{'}$ (the irreducible component that dominates
$X_1$). We have the following commutative diagram:
\begin{eqnarray*}
\xymatrix{X_1^{'}\ar[rr]^{\pi_{X_1^{'}}}\ar[d]^{\tau_1}&& X^{'}\ar[d]^{\t}\\
X_1\ar[rr]^{\pi_{X_1}}\ar[d]^{f_1}&& X\ar[d]^f\\
W_1\ar[rr]^{b_{W_1}}&& W.}
\end{eqnarray*}
Let $U_1=X_1-E$. Then $\pi_{X_1}$ is \'{e}tale on $U_1$, hence
$U_1\times_{X}X^{'}$ is irreducible and smooth. Since $f_1(E)$ is a
proper subvariety of $W_1$, we can assume that there exists a
divisor $E^{'}$ of $X_1^{'}$ such that $X_1^{'}-E^{'}$ is just
$U_1\times_{X}X^{'}$ and $f_1\tau_1(E^{'})$ is a proper subvariety
of $W_1$. Let $X_{w_1}^{'}$ be the fiber of $f_1\tau_1$. Then
$\pi_{X_{w_1}^{'}}=\pi_{X_1^{'}}|_{X_{w_1}^{'}}:
X_{w_1}^{'}\rightarrow X_{w}^{'}$ is Galois \'{e}tale. We have
another commutative diagram involving   morphisms between fibers:
\begin{eqnarray*}
\xymatrix{ X_{w_1}^{'}\ar[rr]^{\pi_{X_{w_1}^{'}}}_{\rm\acute etale}
\ar[dd]^{\tau_1}_{1:1}
&&X_{w}^{'}\ar[dd]_{\tau}^{1:1}\\\\
X_{w_1}\ar[rr]^{\pi_{X_{w_1}}}_{\rm\acute etale} &&X_w. }
\end{eqnarray*}
We then write
\begin{eqnarray*}
&&\t_1^*|km(m-1)\pi_{X_1}^*K_{X/W}+km\pi_{X_1}^*f^*K_{W}|\\
&=&|\pi_{X_1}^{'*}\tau^*(km(m-1)K_{X/W}+kmf^*K_{W})|\\
&=&|H^{'}|+E_1^{'},
\end{eqnarray*}
where $E_1^{'}$ is the fixed divisor. Let $F$ be the maximal divisor which is
$\preceq E_{\chi}$ for all $\chi\in G^*$. By (\ref{d12}),
$\pi_{X_1^{'}}^*F\preceq E_1^{'}$. Hence, by (\ref{d11}), we conclude that
$\pi_{X_1^{'}}^*F|_{X_{w_1}^{'}}$ is fixed in
$\tau_1^*|km(m-1)K_{X_{w_1}}|$ and in particular is fixed in
$\pi_{X_{w_1}^{'}}^*\tau^*|km(m-1)K_{X_{w}}|$, so
$\pi_{X_1^{'}}^*F|_{X_{w_1}^{'}}\preceq \pi_{X_{w_1}^{'}}^*E_w$.
Since $\pi_{X_{w_1}^{'}}$ is \'{e}tale, we have
\begin{eqnarray*}\pi_{X_{w_1}^{'}}^*(F|_{X_w^{'}})\preceq \pi_{X_1^{'}}^*F|_{X_{w_1}^{'}}\preceq
\pi_{X_{w_1}^{'}}^*E_w.
\end{eqnarray*}

We conclude that $F|_{X_w^{'}}\preceq E_{w}$.

Since for any $\chi\in G^*$, we have the natural multiplication
\begin{multline*} H^0(X,
\cO_{X}(km(m-1)K_{X/W}+kmf^*K_{W})\otimes P_{\chi})^{\otimes
M}\\
\rightarrow H^0(X, \cO_{X}(Mkm(m-1)K_{X/W}+Mkmf^*K_{W})), \end{multline*}
we obtain $E_M\preceq MF$, hence $E_M|_{X^{'}_w}\preceq
ME_{w}\preceq Mk(m-1)E_w^{'}.$ This is just (\ref{(3)}) in the proof of 2)
of Lemma \ref{har}, and we can then finish the proof as there.
\end{proof}

 We may write Lemma \ref{4.6} in a more general form:

\begin{prop}Assume that we have the following commutative diagram between smooth projective varieties:
\begin{eqnarray*}
\xymatrix@C=40pt{X_1\ar[r]^{\pi_{X_1}}\ar[d]^{f_1}& X\ar[d]^{f}\\
W_1\ar[r]^{b_{W_1}} & W,}\end{eqnarray*} where $P_m(X)>0$, the morphism
$\pi_{X_1}$ is birationally equivalent to an \'{e}tale morphism and its
  exceptional divisor $E$ is such that $f_1(E)$ is a proper subvariety
of $W_1$, $\pi_{X_1*}\cO_{X_1}=\bigoplus_{\alpha}P_{\alpha}$ is a
direct sum of torsion line bundles on $X$, $W_1$ is of general type,
and $b_{W_1}$ is generically finite and surjective. Then the sheaf
$$f_*(\cO_X(K_X+(m-1)K_{X/W}+f^*K_W)\otimes
 \cJ(||(m-1)K_{X/W}+f^*K_W||))$$ is nonzero, of rank
 $P_m(X_w)$, where $X_w$ is a general fiber of $f$.
\end{prop}

According to Lemma \ref{4.6},
$$\cF_X=b_{W*}f_*(\cO_X(K_X+(m-1)K_{X/W}+f^*K_W)\otimes
 \cJ(||(m-1)K_{X/W}+f^*K_W||)))$$ is a nonzero sheaf on $A/K$.
 By Lemma \ref{mul} and Lemma \ref{4.5}, it is an IT-sheaf of index
0.

Let $\widehat{\cF_X}$ be the Fourier-Mukai transform of $\cF_X$. By the properties of this
  transformation (\cite{Mu}, Theorem
2.2), we know that $\widehat{\cF_X}$ is a W.I.T-sheaf of index $\dim
(A/K)$ and its Fourier-Mukai transform $\widehat{\widehat{\cF_X}}$ is
isomorphic to $(-1_{A/K})^*\cF_X$. In particular,
$\widehat{\cF_X}\neq 0$. Therefore, by the Base Change Theorem  and the definition of the Fourier-Mukai transform,
there exists $P_0\in \Pic^0(A/K)$ such that $h^0(A/K, \cF_X\otimes
P_0)\neq 0$. Thus for any $P\in \Pic^0(A/K)$,
$$h^0(A/K, \cF_X\otimes P)=\chi(\cF_X\otimes P)=\chi(\cF_X\otimes P_0)=h^0(A/K,
\cF_X\otimes P_0)\geq 1.$$

Hence for any $P\in\Pic^0(A/K)$, we have
\begin{eqnarray}\label{d13}& &h^0(X, \cO_X(K_X+(m-1)K_{X/W}+f^*K_W)\otimes f^*b_W^*P)\nonumber\\&\geq& h^0\big(X, \cO_X(K_X+(m-1)K_{X/W}+f^*K_W)\otimes
\nonumber \\&&\qquad\qquad\qquad\qquad\cJ(||(m-1)K_{X/W}+f^*K_W||)\otimes f^*b_W^*P\big)\nonumber\\&=&h^0\big(A/K, b_{W*}f_*(\cO_X(K_X+(m-1)K_{X/W}+f^*K_W)\otimes
\nonumber\\&& \qquad\qquad\qquad\qquad\cJ(||(m-1)K_{X/W}+f^*K_W||))\otimes P\big)\nonumber\\&=&h^0(A/K, \cF_X\otimes P)\nonumber\\&\geq& 1.
 \end{eqnarray}
\begin{lemm}\label{4.7}Let $X$ and $W$ be as in Lemma \ref{4.5}. Suppose   $\kappa(W)>0$. Then for any $r\geq 3$, there exists
a translate $T\subset\Pic^0(A/K)$ of a positive-dimensional torus,
such that
$$h^0(W, \cO_W((r-2)K_W)\otimes b_W^*P)\geq r-2,$$ for all $P\in T$.
\end{lemm}
\begin{proof}Since $\kappa(W)>0$, there exist a positive-dimensional abelian subvariety $T_0\subset \Pic^0(A/K)$ and a torsion point $P_0\in\Pic^0(A/K)$ such that $b_W^*(P_0+T_0)\subset V_0(\omega_W)$ (\cite{CH2}, Corollary 2.4).
Then we iterate Lemma \ref{sim} to get $h^0(W,
\cO_W((r-2)K_W)\otimes b_W^*P)\geq r-2,$ for all $P\in
(r-2)P_0+T_0$.
\end{proof}

If $\kappa(W)>0$, since $mK_X=K_X+(m-1)K_{X/W}+f^*K_W +
(m-2)f^*K_W$, again by (\ref{d13}), Lemma \ref{4.7} and Lemma \ref{sim}, we obtain
$$P_m(X)\geq 1+m-2+\dim (T)-1\geq m-1,$$
which contradicts our assumption. Hence we have finished the proof in the case $\kappa(W)>0$.

If $\kappa(W)=0$, in the diagram (\ref{d8}), $b_W$ is surjective and finite
and $\kappa(W)=0$, hence $W$ is an abelian variety by Kawamata's
Theorem \ref{K2}. We still have (\ref{d13}), however $K_W$ is trivial,
hence it is not enough for us to deduce a contradiction. We will
need new versions of Lemma \ref{4.5} and Lemma \ref{4.6}.

First we go back to diagrams (\ref{d8}) and (\ref{d9}):
\begin{eqnarray*}
\xymatrix@C=40pt{
   X_1 \ar[r]^{\pi_{X_1}}\ar[d]^{g_1}\ar@/_2pc/[dd]_{f_1} & X\ar[d]^{g}\ar@/^2pc/[dd]^{f} \\
   V_1 \ar[r]^{\pi_{V_1}}\ar[d]^{h_1}& V\ar[d]^{h} \\
   W_1 \ar[r]^{b_{W_1}}& W,
}
\end{eqnarray*}
where $V_1$ is birational to $\KK\times W_1$.

Since $\pi_{V_1}: V_1\rightarrow V$ is birationally equivalent to
the \'{e}tale cover $\VV\rightarrow V$, we have
$\pi_{V_1*}\omega_{V_1}=\bigoplus_{\chi\in G^*}(\omega_V\otimes
P_{\chi})$. On the other hand, $V_1$ is birational to $\KK\times
W_1$, hence $h_{1*}\omega_{V_1}=\omega_{W_1}$. Therefore, we have
$$b_{W_1*}\omega_{W_1}=\bigoplus_{\chi\in G^*}h_*(\omega_V\otimes
P_{\chi}).$$

Since $b_{W_1}$ is generically finite and $W_1$ is of general type, by
Theorem 2.3 in \cite{CH2}, we know that the irreducible components
of $V_0(b_{W_1*}\omega_{W_1})$ generate $\Pic^0(W)$. Hence there
exists a $\chi\in G^*$ such that $V_0(h_*(\omega_V\otimes
P_{\chi}))$ is a translated positive-dimensional abelian subvariety
of $\Pic^0(W)$. We denote $h_*(\omega_V\otimes P_{\chi})$ by
$\cF_{\chi}$. Since a general fiber of $h$ is an abelian variety,
$\cF_{\chi}$ is a rank-$1$ torsion-free sheaf.

We can again birationally modify $X$ so that $f^*\cF_{\chi}$ is a
line bundle on $X$. We then have the following result similar to
Lemma \ref{4.5}.
\begin{lemm}\label{L1}Under the assumptions of Lemma \ref{4.5}, assume moreover that $\kappa(W)=0$ and let $\cF_{\chi}$ be as
above. Then the Iitaka model of $(X, (m-1)K_{X}-(m-2)f^*\cF_{\chi})$
dominates $W$.
\end{lemm}
\begin{proof}The proof is analogue to that of Lemma \ref{4.5}. We
have
\begin{eqnarray}\label{11*}
 &&\pi_{X_1}^*((m-1)K_{X}-(m-2)f^*\cF_{\chi})+(m-1)E
\\&=&(m-1)K_{X_1/W_1}+f_1^*K_{W_1}+(m-2)f_1^*K_{W_1}-(m-2)\pi_{X_1}^*f^*\cF_{\chi}
.\nonumber
\end{eqnarray}
Since $\cF_{\chi}\subset b_{W_1*}\omega_{W_1}$, we have an inclusion $b_{W_1}^*\cF_{\chi}\hookrightarrow \omega_{W_1}$, hence an inclusion
$$(m-2)f_1^*b_{W_1}^*\cF_{\chi}=(m-2)\pi_{X_1}^*f^*\cF_{\chi}\hookrightarrow(m-2)f_1^*\omega_{W_1}.$$ Using Viehweg's result
as in the proof of Lemma \ref{4.5}, we obtain that the Iitaka model of
$\pi_{X_1}^*((m-1)K_{X}-(m-2)f^*\cF_{\chi})+(m-1)E$ dominates $W_1$.
We   finish the proof by the same argument as in Lemma \ref{4.5}.
\end{proof}

We also need an analogue of Lemma \ref{4.6}.

\begin{lemm}\label{L2}Under the same assumptions as in Lemma \ref{L1}, the sheaf
$$f_*(\cO_X(mK_X-(m-2)f^*\cF_{\chi})\otimes
\cJ(||(m-1)K_{X}-(m-2)f^*\cF_{\chi}||))$$ is nonzero of
rank $P_m(X_w)$, where $X_w$ is a general fiber of $f$.
\end{lemm}
\begin{proof}It is also parallel to the proof of Lemma \ref{4.6}.
First, by Viehweg's result again, we have the surjectivity of the
restriction map:
\begin{multline*}
H^0(X_1, \cO_{X_1}(km(m-1)K_{X_1/W_1}+kmf_1^*K_{W_1}))\\
\rightarrow  H^0(X_{w_1}, \cO_{X_{w_1}}(km(m-1)K_{X_{w_1}})).
\end{multline*}
Since $E$ is $\pi_{X_1}$-exceptional and $(m-2)f_1^*K_{W_1}\succeq
(m-2)\pi_{X_1}^*f^*\cF_{\chi}$, by (\ref{11*}), we have the surjectivity
of the restriction map:
\begin{multline*}
H^0(X_1, \pi_{X_1}^*\cO_{X_1}(km(m-1)K_{X}-km(m-2)f^*\cF_{\chi}))\\
\rightarrow H^0(X_{w_1}, \cO_{X_{w_1}}(km(m-1)K_{X_{w_1}})).
\end{multline*}
Then the rest of the proof is the same as the proof of Lemma
\ref{4.6}.
\end{proof}

By Lemma \ref{L1} and Lemma \ref{L2}, we again conclude as in (\ref{d13}) that
$$h^0(X, \cO_X(mK_{X}-(m-2)f^*\cF_{\chi})\otimes f^*P)\geq 1,$$ for
any $P\in\Pic^0(W)$.

As in the proof of Lemma \ref{4.7}, there exists a translate $T\subset
\Pic^0(W)$ of a positive-dimensional abelian variety  such that
$h^0(X, \cO_X((m-2)f^*\cF_{\chi})\otimes f^*P)\geq m-2$, for any
$P\in T$. We again have $P_m(X)\geq m-1$, which is a contradiction.
This finishes the proof of Theorem \ref{10} in the case $\kappa(W)=0$.

In all, we have finished the proof of Theorem \ref{10}.
\end{proof}

 \section*{Acknowledgements}
I am extremely grateful to my thesis advisor, O. Debarre, for
his patient corrections and helpful remarks. I would also like to thank J.A. Chen and
C.D. Hacon for several helpful conversations.

\end{document}